


\documentclass[11pt]{article} 

\usepackage[utf8]{inputenc} 


\usepackage{geometry} 
\geometry{a4paper} 

\usepackage{graphicx} 
\graphicspath{ {./images/} }


\usepackage{booktabs} 
\usepackage{array,multirow} 
\usepackage{paralist} 
\usepackage{verbatim} 
\usepackage{subfig} 

\usepackage{fancyhdr} 
\pagestyle{fancy} 
\lhead{}\chead{}\rhead{}
\lfoot{}\cfoot{\thepage}\rfoot{}

\usepackage{sectsty}
\allsectionsfont{\sffamily\mdseries\upshape} 

\usepackage[nottoc,notlof,notlot]{tocbibind} 
\usepackage[titles,subfigure]{tocloft} 


\usepackage{tikz}
\usetikzlibrary{positioning, fit, shapes}

\usepackage{amssymb}
\usepackage{amsmath}
\usepackage{amsthm}
\usepackage{mathtools}
\usepackage{planarforest}
\usepackage[ruled,vlined]{algorithm2e}
\usepackage{MnSymbol}
\usepackage{fge}
\usepackage[numbers,sort&compress]{natbib}
\usepackage{hyperref}

\newtheorem{assumption}{Assumption}
\newtheorem{theorem}{Theorem}[section]
\newtheorem{lemma}[theorem]{Lemma}
\newtheorem{proposition}[theorem]{Proposition}
\newtheorem{corollary}[theorem]{Corollary}
\newtheorem{corollary-definition}[theorem]{Corollary-Definition}

\newtheorem{definition}[theorem]{Definition}
\newtheorem{remark}[theorem]{Remark}
\theoremstyle{remark}
\newtheorem{example}[theorem]{Example}

\newcommand{\Aut}{\text{Aut}}

\newcommand{\graft}{\curvearrowright}

\newcommand{\gl}{\diamond}
\newcommand{\ckdual}{\circledast}

\newcommand{\E}{\mathbb{E}}
\newcommand{\N}{\mathbb{N}}

\newcommand{\Fg}{\mathcal{F}_g}

\newcommand{\EF}{\mathcal{EF}}
\newcommand{\AF}{\mathcal{AF}}
\newcommand{\AT}{\mathcal{AT}}

\newcommand{\A}{\mathcal{A}}

\newcommand\restrict[1]{\raisebox{-.5ex}{$|$}_{#1}}

\newcolumntype{C}[1]{>{\centering\let\newline\\\arraybackslash\hspace{0pt}}m{#1}}

\bibliographystyle{abbrv}



\title{Exotic B-series and S-series: algebraic structures and order conditions for invariant measure sampling}

\author{Eugen Bronasco\textsuperscript{1}}

\begin{document}
\footnotetext[1]{
Université de Genève, Section de mathématiques, Genève, Switzerland. Eugen.Bronasco@unige.ch. \\
\indent \indent 7-9 rue du Conseil-Général, Case postale 64, 1211 Genève 4.}

\maketitle
\vspace{-0.6cm}
\textit{Communicated by Hans Munthe-Kaas.}

\begin{abstract}
B-series and generalizations are a powerful tool for the analysis of numerical integrators. An extension named exotic aromatic B-series was introduced to study the order conditions for sampling the invariant measure of ergodic SDEs. Introducing a new symmetry normalization coefficient, we analyze the algebraic structures related to exotic B-series and S-series. Precisely, we prove the relationship between the Grossman-Larson algebras over exotic and grafted forests and the corresponding duals to the Connes-Kreimer coalgebras and use it to study the natural composition laws on exotic S-series. Applying this algebraic framework to the derivation of order conditions for a class of stochastic Runge-Kutta methods, we present a multiplicative property that ensures some order conditions to be satisfied automatically. 
\end{abstract}
\noindent
\textit{AMS Subject Classification (2020)}: 60H35, 37M25, 65L06, 41A58, 05C05

\noindent
\textit{Keywords}: Stochastic differential equations, invariant measure, ergodicity, exotic aromatic trees, exotic aromatic forests, composition law, order conditions

\section{Introduction}

The concept of B-series was introduced in the 1960s by John Butcher as a tool to study Runge-Kutta methods and generalizations of these. The idea of B-series originates from the fact that both Taylor expansions of the exact solution of an ODE and of numerical solutions obtained using Runge-Kutta methods can be written using formal sums indexed by rooted trees \cite{Butcher_63,Hairer_Wanner}.

For an arbitrary dimension $d\in\mathbb{N}$, let $f : \mathbb{R}^d \to \mathbb{R}^d$ be a vector field and let the covariant derivative at a point $p \in \mathbb{R}^d$ along the vectors $v_1, \dots, v_n \in \mathbb{R}^d$ be defined as
\[ f^{(n)} (p) (v_1, \dots, v_n) := \sum_{i_1, \dots, i_n = 1}^d v_1^{i_1} \cdots v_n^{i_n} \frac{\partial^n f}{\partial x_{i_1} \cdots \partial x_{i_n}} (p). \]
Then, B-series are defined using the correspondence between vector fields of the form $f(y_0)$, $f^\prime (y_0) f(y_0)$, $f^{\prime \prime} (y_0) (f(y_0), f(y_0))$, $\dots$ and the set of rooted non-planar trees of the form $\forest{b}, \forest{b[b]}, \forest{b[b,b]}, \dots$ \cite{Cayley}. This allows us to use combinatorics of rooted trees to study the properties and operations over vector fields.

Larger classes of trees and tree-like structures were introduced in the literature \cite{Butcher21bsa,ConnesKreimer,GeomIntBook} to fit different purposes. For example, bicolored trees, e.g. 
\[ \forest{b[w,b[w]]}, \quad \forest{w[b,b,b[w]]}, \quad \forest{b[w,b[w],w]}, \]
were introduced to study splitting methods and to define P-series that are used in the study of partitioned methods. The close relationship between vector fields and differential operators lead to the introduction of collections of trees called forests, e.g.
\[ \forest{b[b],b[b,b[b]]}, \quad \forest{b,b,b[b],b}, \quad \forest{b[b[b,b]],b}, \]
and their use to represent differential operators. The series that are based on forests are called S-series and they were originally introduced to study first integrals~\cite{Mur99}. 

Another generalization of trees, called aromatic trees, was introduced as a tool to study numerical integrators equivariant under affine maps. Aromatic trees are used to define aromatic B-series \cite{aromaticBseries} and were first introduced independently in \cite{ChartierMurua} and in \cite{Iserles2007E} as a way to express the divergence of the vector field of a problem. We note that rooted trees can be defined as directed trees with roots being the vertices with no outgoing edges. Aromatic trees\footnote{In graph theory terminology, aromatic trees are not trees because they may contain cycles. This name was proposed in \cite{ChartierMurua} in analogy with carbon chemistry.} are graphs with every vertex having at most one outgoing edge with exactly one vertex (the root) without an outgoing edge. A collection of aromatic trees is called an aromatic forest and the set of aromatic forests is used to define aromatic S-series \cite{Bogfjellmo}.
 An aromatic tree can have multiple connected components, examples are,
\[ \forest{(b,b[b,b]),b[b[b]]}, \quad \forest{b[b,b,b[b]]}, \quad \forest{(b),(b,b[b]),b[b]}, \]
where the directions of edges forming a cycle is counterclockwise. We extend aromatic trees and forests to the stochastic context and use S-series as the main tool.

Different generalizations of trees were introduced in the stochastic context for the study of the error on the trajectory (strong error) and the error on the law (weak error) of the numerical solution. Burrage and Burrage \cite{Burrage_Burrage_96} and Komori, Mitsui and Sugiura \cite{Komori_Mitsui_Sugiura} introduced stochastic trees and B-series for studying the order conditions for strong convergence of SDE, and \cite{Burrage_Burrage_00,Debrabant_Kvaerno_08,Rossler_06,Debrabant_Kvaerno_10,Rossler_04,Rossler2_06,Rossler_10} for study of high order weak and strong methods on a finite time interval. We consider ergodic integrators and recall that the weak order can be used to obtain convergence with respect to the sampling of the invariant measure~\cite{Talay_Tubaro}. However, there exist schemes which have a smaller error with respect to the sampling of the invariant measure than is predicted by their weak order, see \cite{Abdulle_Vilmart_Zygalakis,BouRabee10lra,Laurent2021-to,Laurent_2019}, and \cite{Laurent_2021}, where the order with respect to the invariant measure is considered. The order conditions for sampling the invariant measure, and their algebraic structures, are the main focus of this paper.

First studied in \cite{Laurent_2019}, grafted trees and exotic trees are the trees that correspond to vector fields appearing in the study of SDEs with additive noise. Due to the nice properties of the overdamped Langevin equation, one of which is ergodicity of the solution, we consider it as an example and study the related algebraic structures and the order conditions with respect to the invariant measure. Some examples of the grafted trees and exotic trees, with more examples in Table \ref{table:all_grafted_exotic_trees} of the Appendix, are
\[ \forest{b[x,b[x,x]]}, \quad \forest{b[x,b,b[b[x]],x]}, \quad \forest{b[x,b[b,x]]}, \quad \text{and} \quad \forest{b[1,b[1,b[2]],2]}, \quad \forest{b[b[1],b[1]]}, \quad \forest{b[1,b[2,2],1]}. \ \]

In this paper, we use B-series over grafted trees following \cite{Laurent_2019} and introduce a new normalization of the series using the symmetry coefficients $\sigma(\tau)$ of trees $\tau$ analogously to the deterministic case~\cite{ButcherSanzSerna}. Due to the fact that we work in a stochastic context, we study the expectation of the functional $\phi$ applied to the one step of a method $X_1 := \Psi_h(X_0, f)$, i.e. $\mathbb{E} [\phi(\Psi_h(X_0, f))]$. This leads us to the notion of S-series and we introduce S-series over grafted forests and exotic S-series.

In Sections \ref{sec:Asigma} and \ref{sec:exotic_S-series}, we use the combinatorial algebra framework of decorated aromatic forests to describe the relationship between S-series over grafted forests and exotic S-series, and present composition laws for the new kinds of S-series. Algebraic structures that we present do not depend on the particular problem or its dimension $d$ and are valid for any SDE with additive noise and can be generalized to SDEs with multiplicative noise in a straightforward way.

In Section \ref{sec:Coef_mult}, the formalism of exotic forests is used to define a theoretical algorithm that generates order conditions with respect to the invariant measure for numerical methods that can be expanded using B-series. The algorithm defines a linear map $A : \EF \to \widehat{\EF}$ that is applied to the truncated S-series corresponding to the method and returns a linear combination of exotic forests in which the coefficient of an exotic forest $\pi$ is denoted by $\omega (\pi)$. The order $p$ conditions obtained in this way have the form
\[ \omega (\pi) = 0 \quad \text{where } \pi \in \widehat{EF} \text{ with } |\pi| < p, \]
where $\widehat{EF}$ is a subset of exotic forests. We prove Theorem \ref{thm:Coef_mult}.

\vspace{0.2cm}
\noindent\textbf{Theorem \ref{thm:Coef_mult}}
\textit{
    Let $\cdot$ denote the concatenation product and let $\omega$ be the order condition map for a numerical method that can be expanded as a B-series over grafted trees, then,
    \begin{equation}\label{eq:Coef_mult}
        \omega(\pi_1 \cdot \pi_2) = \omega(\pi_1) \omega(\pi_2), \quad \text{for } \pi_1, \pi_2 \in EF.
    \end{equation}
}

Theorem \ref{thm:Coef_mult} allows us to decrease the number of order conditions with respect to the invariant measure for a class of numerical methods that includes stochastic Runge-Kutta methods defined in Definition \ref{RK_form}. If an exotic forest $\pi$ can be written as $\pi = \pi_1 \cdot \pi_2$, then the order condition $\omega(\pi) = 0$ is automatically satisfied if $\omega(\pi_1) = 0$ is satisfied. For example, Theorem \ref{thm:Coef_mult} implies the following relations between the order conditions: 
\[\begin{array}{cc}
    \omega (\forest{b,b}) = \omega(\forest{b})^2, & \omega (\forest{b,b,b}) = \omega(\forest{b})^3, \\
    \omega (\forest{b[b],b}) = \omega(\forest{b[b]}) \omega(\forest{b}), & \omega (\forest{b[1,1],b}) = \omega(\forest{b[1,1]}) \omega(\forest{b}),
\end{array}\]
which decreases the number of order conditions, in particular, for order $3$ from $13$ to $9$. The values of $\omega(\forest{b[b],b}), \omega(\forest{b[b]}),$ and $\omega(\forest{b})$ for stochastic Runge-Kutta methods with coefficients $b_i, a_{ij}, d_i$ with $i,j = 1,\dots,s$ are presented below,
\begin{align*}
    \omega (\forest{b[b]}) &= \sum_{i,j=1}^s b_i a_{ij} - \frac{1}{2} + \sum_{i=1}^s b_i - 2 \sum_{i=1}^s b_i d_i, \quad \quad \omega(\forest{b}) = \sum_{i=1}^s b_i - 1, \\
    \omega (\forest{b[b],b}) &= -2 \sum_{i,j=1}^s b_i d_i b_j - \frac{3}{2} \sum_{i=1}^s b_i + \sum_{i,j=1}^s b_i b_j + \sum_{i,j,k=1}^s b_i a_{ij} b_k + 2 \sum_{i=1}^s b_i d_i + \frac{1}{2} - \sum_{i,j=1}^s b_i a_{ij}.
\end{align*}
The list of values of $\omega$ for all exotic trees up to size $3$ can be found in Table \ref{table:append_ord_cond} in the Appendix.
This property was first observed for order $3$ by manual computation in \cite{Laurent_2019}. In this paper, we prove the property for arbitrary high order.

\section{Fundamentals}\label{sec:num_intro}

We consider the overdamped Langevin equation which is widely used in molecular dynamics and is ergodic under appropriate assumptions, 
\begin{equation}\label{eq:problem}
dX(t) = f(X(t))dt + \sqrt{2} dW(t),
\end{equation}
where $X(t) \in \mathbb{R}^d$ is a stochactic process with $X(0) = X_0$, the vector field $f = - \nabla V : \mathbb{R}^d \to \mathbb{R}^d$ is the gradient of a smooth and globally Lipschitz potential $V : \mathbb{R}^d \to \mathbb{R}$ that describes molecular interactions, $\sqrt{2}$ is a constant that can be changed by rescaling the problem in time, and $W(t)$ is a $d-$dimensional standard Wiener process fulfilling the usual assumptions.

Let us consider test functions $\phi \in \mathcal{C}^\infty_P (\mathbb{R}^d, \mathbb{R})$ which are taken to be smooth functionals on $\mathbb{R}^d$ with all partial derivatives having polynomial growth of the form 
\[ |\frac{\partial^n \phi(x)}{\partial x_{i_1} \cdots \partial x_{i_n}}| \leq C_n(1 + |x|^{s_n}) \quad \text{for any } n \in \mathbb{N} \text{ and } 1 \leq i_k \leq d \text{ with } k = 1, \dots, n, \]
with some constants $C_n$ and $s_n$ independent of $x$.
We consider numerical methods with the following weak Taylor expansion. Given an integrator $X_1 = \Psi_h (X_0, f, \xi)$, we have 
\[ \mathbb{E} [ \phi(X_1) | X_0 = x ] = \phi(x) + h \mathcal{A}_1 \phi(x) + h^2 \mathcal{A}_2 \phi(x) + \cdots, \]
where $\mathcal{A}_i$, $i = 1, 2, \dots$, are linear differential operators. For more details see \cite{Talay_Tubaro}. 

An integrator $X_1 = \Psi_h (X_0, f, \xi)$ satisfying the usual assumptions (see Section \ref{sec:Coef_mult}) has \textit{weak order} $q$ if
\[ |\mathbb{E} [\phi(X_1) | X_0 = x] - \mathbb{E} [\phi(X(h)) | X_0 = x] | = O(h^{q+1}), \]
where $\phi \in C^\infty_P (\mathbb{R}^d, \mathbb{R})$ is a test function. We note that the expectation of the functional of the exact solution has the following weak Taylor expansion:
\[ \mathbb{E} [ \phi(X(h)) | X_0 = x ] = \phi(x) + h \mathcal{L} \phi(x) + h^2 \frac{\mathcal{L}^2 \phi(x)}{2!} + \cdots + h^k \frac{\mathcal{L}^k \phi(x)}{k!} + \cdots, \]
with the generator $\mathcal{L} \phi := f \cdot \nabla \phi + \Delta \phi$ where $\Delta \phi = \sum_{i=1}^d \frac{\partial^2 \phi}{\partial x_i^2}$ denotes the Laplacian operator. Thus, an integrator has weak order $q$ if 
\[ \mathcal{A}_k = \frac{\mathcal{L}^k}{k!}, \quad \text{for } k = 1, \dots, q.  \]

In this paper, we consider ergodic problems that have unique invariant measures that characterize the trajectories of the system. 
\begin{definition}
	A problem is \textit{ergodic} if there exists a unique invariant measure $\mu$ satisfying for all deterministic initial conditions $X_0$ and all smooth test functions $\phi$,
	\[ \lim_{T \to \infty} {1 \over T} \int_0^T \phi (X(s)) ds = \int_{\mathbb{R}^d} \phi (x) d \mu (x), \quad \text{almost surely.} \]
\end{definition}
A similar definition can be applied to numerical integrators. 
\begin{definition}
	A numerical method $X_1 = \Psi_h (X_0, f, \xi)$ is ergodic if there exists a unique invariant probability law $\mu^h$ with finite moments of any order satisfying for all deterministic initial conditions $X_0 = x$ and all smooth test functions $\phi$,
	\[ \lim_{N \to \infty} {1 \over N + 1} \sum_{n=0}^N  \phi (X_n) = \int_{\mathbb{R}^d} \phi (x) d \mu^h (x), \quad \text{almost surely.}\]
	See \cite{Mattingly_2002} for more details.
\end{definition}

Integrators can be used to approximate the invariant measure $\mu$ of the system using the invariant measure $\mu^h$ of the integrator. The accuracy of the approximation is characterized by the order of the integrator with respect to the invariant measure.

\begin{definition}
	A numerical method $X_1 = \Psi_h (X_0, f, \xi)$ has order $p$ with respect to the invariant measure of the SDE if
	\[ \left| \int_{\mathbb{R}^d} \phi (x) d \mu^h (x) - \int_{\mathbb{R}^d} \phi (x) d \mu (x) \right| \leq C h^p, \]
	where $C$ is independent of $h$ assumed small enough.
\end{definition}

We note that an order $p$ with respect to the invariant measure can be shown for a large class of integrators using the weak Taylor expansion. The details are discussed in Section \ref{sec:num_framework}. We also note that the order $p$ with respect to the invariant measure is higher or equal than the weak order $q$ of the integrator, that is $p \geq q$. We will use the following form of stochastic Runge-Kutta methods.

\begin{definition}\label{RK_form}
    Let $a_{ij}, b_i, d^{(k)}_i$ be the coefficients defining the stochastic Runge-Kutta (sRK) scheme, and $\xi_n^{(k)} \sim \mathcal{N} (0, I_d)$ be independent Gaussian random vectors. Then, the stochastic Runge-Kutta scheme has the form:
\begin{align*}
Y_i &= X_n + h \sum_{j=1}^s a_{ij} f(Y_j) + \sum_{k=1}^l d_i^{(k)} \sqrt{h} \sqrt{2} \xi_n^{(k)}, \quad i = 1, \dots, s, \\
X_{n+1} &= X_n + h \sum_{i=1}^s b_i f(Y_i) + \sqrt{h} \sqrt{2} \xi_n^{(1)}.
\end{align*}
\end{definition}
We shall assume for simplicity of the presentation that $l = 1$ which is sufficient to achieve weak order $2$ or order $3$ with respect to the invariant measure. We note that $l > 1$ is necessary in general to achieve high order~\cite{Laurent_2019}. The analysis in this paper extends naturally to the $l > 1$ case by considering grafted forests with decorated grafted vertices. Two grafted vertices can form a pair only if they are decorated by the same number. For example, for $l = 2$, we should consider grafted of the form
\[ \forest{b[x_1_90,x_1_90]}, \quad \forest{b[b[x_1_90,x_2_90],x_1_45],x_2_45}. \]
More details on grafted and exotic forests can be found in Section \ref{sec:tree_formalism}.

\subsection{The framework of B-series and S-series}

Let us consider the space $\mathcal{X}$ of vector fields on $\mathbb{R}^d$. Let $f, g \in \mathcal{X}$ and let $f[g]$ denote the differentiation of $g$ in the direction of $f$, that is, for $p \in \mathbb{R}^d$, we have
\[ f[g] (p) = \big( f(p)[g] \big) (p) = \sum_{i=1}^d f^i (p) \partial_i g (p), \quad \text{with } \partial_i g := \frac{\partial g}{\partial x_i}. \]
This way, vector fields define differential operators of degree one. The differential operators of higher degrees can be obtained by pointwise composition of vector fields, for example, let $f, g, h \in \mathcal{X}$ and $\partial_{ij} := \partial_i \partial_j$, then, for $p \in \mathbb{R}^d$, we have
\[ (f g) [h] (p) = \big( (f(p) g(p)) [h] \big) (p) = \sum_{i, j = 1}^d f^i (p) g^j (p) \partial_{ij} h (p), \]
From now on we will omit writting $p$ and the differentiation will be written as
\[ f[g] = \sum_{i=1}^d f^i \partial_i g, \quad \text{and} \quad f g [h] = \sum_{i,j=1}^d f^i g^j \partial_{ij} h. \]
Due to the fact that the pointwise composition is commutative, differentiation is a pre-Lie product, that is, it satisfies the following relation
\[ f [g [h]] - f[g][h] = g [f [h]] - g[f][h], \quad \text{for } f, g, h \in \mathcal{X}. \]
We consider an initial value ODE of the form
\begin{equation}\label{eq:classic_problem}
    \frac{dy}{dt} = f(y), \quad y(0) = y_0. 
\end{equation}
The elementary differentials that appear as terms in the Taylor expansion of $y(h)$ around $0$ form a pre-Lie algebra with the product given by differentiation. For example,
\[ hf [h^2 f^\prime f] = h^3 f^\prime f^\prime f + h^3 f^{\prime \prime} (f, f). \]

Let us consider the pre-Lie algebra of non-planar rooted trees $(\mathcal{T}, \graft)$ with $\graft$ being the grafting product on trees defined by attaching the root of the left operand to a vertex of the right operand in all possible ways, for example,
\[ \forest{b} \graft \forest{b[b]} = \forest{b[b[b]]} + \forest{b[b,b]}. \]
We extend the grafting product to the commutative algebra of forests $(\mathcal{F}, \cdot)$ which is the symmetric algebra on trees, $(\mathcal{F}, \cdot) := S_\mathbb{R} (\mathcal{T})$. Let $\tau \in \mathcal{T}$ and $\pi_1, \pi_2 \in \mathcal{F}$, then
\[ (\tau \cdot \pi_1) \graft \pi_2 = \tau \graft (\pi_1 \graft \pi_2) - (\tau \graft \pi_1) \graft \pi_2, \]
\[ \tau \graft (\pi_1 \cdot \pi_2) = (\tau \graft \pi_1) \cdot \pi_2 + \pi_1 \cdot (\tau \graft \pi_2). \]
We note that this definition of grafting on forests is well-defined since grafting is a pre-Lie product. The details can be found in \cite{OudomGuin}. 

\begin{remark}
    The algebra $(\mathcal{F}, \cdot, \graft)$ forms a commutative version of D-algebra structure introduced in \cite{MuntheKaas08oth}.
\end{remark}
In \cite{Chapoton_01}, it is proven that the algebra $(\mathcal{T}_n, \graft)$ with $n-$colored trees is the free pre-Lie algebra with $n$ generators. Therefore, there exists a surjective morphism from the pre-Lie algebra $(\mathcal{T}, \graft)$ onto the pre-Lie algebra of elementary differentials generated by $hf$. The morphism is extended to $(\mathcal{F}, \cdot, \graft)$ by sending the commutative product to the pointwise composition product of vector fields. The morphism is denoted by $F_f$ and we give the explicit formula in Definition \ref{def:F_f_classic}. Let us use the following notation, $[d] := \{ 1, \dots, d \}$.

\begin{definition}\label{def:F_f_classic}
    Let $\tau$ be a tree with all edges being directed towards the root, $V(\tau)$ be the set of vertices of $\tau$, and $p(v)$ be the set of predecessors of $v$ in $\tau$. Then,
    \[ F_f(\tau)^k = \sum_{\substack{\alpha: V(\tau) \to [d]\\\alpha(r) = k}} \prod_{v \in V(\tau)} (\prod_{u \in p(v)} \partial_{\alpha(u)}) F_f (v)^{\alpha(v)}, \]
    where the sum is taken over all decorations of $\tau$ by the set $[d]$ such that the root $r$ of $\tau$ is decorated by $k \in [d]$, and $F_f (\bullet) = hf$ where $h$ is the timestep.
\end{definition}
For example, $F_f (\forest{b[b,b]}) = h^3 \sum_{i, j = 1}^d f^i f^j \partial_{ij} f$, where we decorate the tree as $\forest{b_k[b_i,b_j]}$ for the $k^{th}$ component with $i,j,k \in 1, \dots, d$.

\begin{definition}\cite{Butcher_63,Hairer_Wanner}
    B-series are formal sums of vector fields of the following form
    \[ B(a) = \sum_{\tau \in T} \frac{a(\tau)}{\sigma(\tau)} F_f(\tau), \]
    where $T$ is the set of rooted non-planar trees, $a : T \to \mathbb{R}$ is a functional, and $\sigma(\tau)$ is the size of the automorphism group of $\tau$. 
\end{definition}

The exact solution $y(h)$ and one-step of a Runge-Kutta method $\Psi_h (A, b, f)$ can be expanded using B-series as $y_0 \mapsto y_0 + B(a)(y_0)$ with the functionals $a : T \to \mathbb{R}$ defined appropriately.
The concept of S-series was used to study the first integrals of B-series~\cite{Mur99}. Let $I : \mathbb{R}^d \to \mathbb{R}$ be a first integral, then we have the following property
\[ I\big(y_0 + B(a)(y_0)\big) = S(a)[I](y_0) = \sum_{\pi \in F} \frac{a(\pi)}{\sigma(\pi)} F_f(\pi)[I](y_0), \]
where $S(a)$ is called an S-series, $F$ is the set of forests, the functional $a : F \to \mathbb{R}$ extends to forests by $a(\pi_1 \cdot \pi_2) = a(\pi_1) a(\pi_2)$ for $\pi_1, \pi_2 \in F$. We note that $y_0 + B(a)(y_0) = S(a)[\text{Id}](y_0)$ where $\text{Id}$ is the identity $\text{Id} (x) = x$. Similar ideas are used to write the flow of a differential equation as the exponential of $F_f (\bullet)$, i.e. 
\[ y(h) = \exp\big(F_f(\bullet)\big) \text{Id}(y_0) = S(\alpha)[\text{Id}](y_0), \]
where $\alpha$ is an appropriate functional on forests. The details can be found in Chapter III.5.1 of \cite{GeomIntBook} in the context of the Baker-Campbell-Hausdorff formula for splitting integrators.
We note that in a stochastic context, we can replace the first integral $I$ with a test function $\phi$ and use S-series to study the expectation of a functional of one-step of a numerical integrator, i.e. $\mathbb{E}[\phi (y_0 + B(a)(y_0))]$, using its weak Taylor expansion~\cite{Talay_Tubaro}.

An important feature of B-series and S-series is that they are completely characterized by the functionals $a : T \to \mathbb{R}$. This allows us to use combinatorial properties and algebraic structures on trees and forests to study the properties and operations of numerical integrators.

\subsection{Extended tree formalism}\label{sec:tree_formalism}

We extend the framework of B-series and S-series by extending the tree and forest formalisms. We consider the sets of aromatic trees $AT = A \times T$ where $A$ is the set of multi-aromas, i.e. graphs in which every vertex has exactly one outgoing edge. The set $A$ includes the empty graph and some of its elements are
\[ \mathbf{1}, \quad \forest{(b)}, \quad \forest{(b[b]),(b,b,b[b])}, \quad \forest{(b[b,b]),(b[b])}, \quad \forest{(b[b],b,b,b[b,b[b]]),(b[b],b)}. \]
The corresponding vector spaces are denoted by $\A$ and $\AT$, respectively. We obtain the set of aromatic forests $AF$ by concatenating aromatic trees in all possible ways, including the empty forest $\mathbf{1}$. The corresponding vector space is denoted by $\AF$.

We define decorated aromatic forests as aromatic forests $\pi$ together with maps $\alpha : V(\pi) \to D$ that send vertices of $\pi$ to decorations from the set $D$ which is defined depending on the type of forests we want to represent. The set of decorated aromatic forests with an abstract set $D$ is denoted by $AF_D$ and the vector space by $\AF_D$. 

Let us consider the space of bicolored aromatic forests $\AF_{\bullet, \times}$ spanned by aromatic forests $\pi \in AF$ together with decorations $\alpha_g : V(\pi) \to \{ \bullet, \times \}$. Let the space of grafted forests be defined as the quotient space $\AF_g := \AF_{\bullet, \times} /_{\mathcal{K}_\times}$ with
\[ \mathcal{K}_\times := \text{span} \{ (\pi, \alpha_g) \in AF_{\bullet, \times} \; : \; \exists (v, u) \in E(\pi), \, \alpha_g(u) = \times \}. \]
That is, grafted forests are bicolored aromatic forests $(\pi, \alpha_g)$ for which $\alpha_g^{-1} (\times)$ is a subset of leaves of $\pi$.
For example, some grafted trees are listed below
\[ \forest{(b,b),b}, \quad \forest{(b[b]),b}, \quad \forest{(b),b[b]}, \quad \forest{(b[x,x]),b}, \quad \forest{(b[x]),b[x]}, \quad \forest{(b),b[x,x]}, \quad \forest{(b,b[x]),x}, \quad \forest{(b[b,x]),x}, \quad \forest{(b[b[x]]),x}, \quad \forest{(b[x,x,x]),x}. \]
The size of a grafted forest is taken to be the sum of weights of vertices with black vertices having weight $1$ and grafted vertices having weight $0.5$. All grafted trees up to size $3$ are listed in Table \ref{table:all_grafted_exotic_trees} of the Appendix. Grafted forests arise when we consider the overdamped Langevin equation and the B-series \cite{Laurent_2019} that are used to study it. The sets of grafted forests and trees are denoted by $AF_g$ and $AT_g$, and the corresponding vector spaces are denoted by $\AF_g$ and $\AT_g$.

Exotic forests are grafted forests with even number of grafted vertices in which all grafted vertices are paired. For example, some exotic trees are listed below
\[ \forest{(b,b),b}, \quad \forest{(b[b]),b}, \quad \forest{(b),b[b]}, \quad \forest{(b[1,1]),b}, \quad \forest{(b[1]),b[1]}, \quad \forest{(b),b[1,1]}, \quad \forest{(b,b[1]),1}, \quad \forest{(b[b,1]),1}, \quad \forest{(b[b[1]]),1}, \quad \forest{(b[1,1,2]),2}. \]
All exotic trees up to size $3$ are listed in Table \ref{table:all_grafted_exotic_trees} of the Appendix.
We note that the pairing between two grafted vertices is denoted by associating a natural number to the two grafted vertices. The choice of the particular natural number does not matter. Exotic forests are used to represent the differential operators that appear in the expansion of $\mathbb{E}[\phi(y_0 + B(a)(y_0))]$ applied to the overdamped Langevin equation~\cite{Laurent_2019}. We build the space of exotic forests in several steps. 
\begin{enumerate}
    \item consider a space $\AF_{\bullet \mathbb{N}}$ spanned by the set $AF_{\bullet \mathbb{N}}$ of aromatic forests $\pi$ with decorations $\alpha : V(\pi) \to \{\bullet\} \sqcup \mathbb{N}$ such that $|\alpha^{-1}(k)|$ is even for all $k \in \mathbb{N}$, 
    \item let $\widetilde{\AF}_{\bullet \mathbb{N}}$ be the completion with respect to the graduation given by the number of vertices. The elements of $\widetilde{\AF}_{\bullet \mathbb{N}}$ are formal sums of the form
        \[ \sum_{(\pi, \alpha) \in AF_{\bullet \mathbb{N}}} a(\pi, \alpha) (\pi, \alpha), \quad \text{for } a \in \AF_{\bullet \mathbb{N}}^*, \]
    \item define the space of exotic forests $\EF$ to be spanned by the set $EF$ of elements
        \begin{equation}\label{eq:exotic_def}
            (\pi, \alpha_e) := \sum_{\alpha \in P(\alpha_e)} (\pi, \alpha) \  \in \  \widetilde{\AF}_{\bullet \mathbb{N}},
        \end{equation}
        with $\alpha_e$ being a decoration of $\pi \in AF$ by $\{\bullet\} \sqcup \mathbb{N}$ such that $|\alpha^{-1}(k)| = 2$ for all $k \in \mathbb{N}$ and $P(\alpha_e)$ is the set of decorations $\alpha$ with $\alpha^{-1}(\bullet) = \alpha_e^{-1}(\bullet)$ and 
        \[ \alpha(v_1) = \alpha(v_2) \quad \text{if} \quad \alpha_e(v_1) = \alpha_e(v_2), \quad \text{for } v_1, v_2 \in V(\pi). \]
\end{enumerate}
We say that if $P(\alpha_{e,1}) = P(\alpha_{e,2})$, then $\alpha_{e,1} = \alpha_{e,2}$. For example, the following two exotic forest $(\pi_1, \alpha_{e,1})$ and $(\pi_2, \alpha_{e,2})$ are equal
\[ \forest{(b[b[3],1,1]),b[b[2],b[2,3]]} = \forest{(b[b[2],3,3]),b[b[1],b[1,2]]}, \quad \text{since } P(\alpha_{e,1}) = P(\alpha_{e,2}). \]
A pair of grafted vertices forms a \textit{liana}. This terminology was proposed in \cite{Laurent_2019}\textcolor{blue}{,} in which the pairs of grafted vertices are replaced by dotted lines.

\begin{definition}\label{def:AF_D_morphism}
    A morphism $\varphi : (\pi_1, \alpha_1) \to (\pi_2, \alpha_2)$ between two decorated aromatic forests is a morphism between the aromatic forests $\varphi : \pi_1 \to \pi_2$ such that $\alpha_1 = \alpha_2 \circ \varphi$. 
\end{definition}
An isomorphism is an invertible morphism, and an automorphism is an isomorphism of a decorated aromatic forest with itself. The group of all automorphisms of a decorated aromatic forests $(\pi, \alpha)$ is denoted by $\Aut (\pi, \alpha)$. The symmetry coefficient of $(\pi, \alpha)$ is denoted by $\sigma (\pi, \alpha)$ and is defined as the size of the automorphism group. For example,
\[ \sigma(\forest{b[x,x,b[x,b]]}) = 2, \quad \sigma(\forest{x,b[x,x,b[x]]}) = 2, \quad \sigma(\forest{b[x,x,x,x]}) = 4!, \quad \sigma(\forest{1,b[1,2,b[2]]}) = 1, \quad \sigma(\forest{b[1,1,2,2]}) = 8. \]

The combinatorial and algebraic structures of decorated aromatic forests are studied in more detail in Section \ref{sec:Asigma}. We note that we will often omit writting $\alpha$ and denote the elements of $AF_D$ simply by $\pi$. In the cases when $\pi \in AF_g$ or $\pi \in EF$, the corresponding decorations are denoted by $\alpha_g$ and $\alpha_e$, respectively.

We extend the concatenation product of forests to the concatenation product of decorated aromatic forests as follows. Let $\pi_i \in AF$ and $\alpha_i : V(\pi_i) \to D$ for $i = 1,2$, then,
\[ (\pi_1, \alpha_1) \cdot (\pi_2, \alpha_2) = (\pi_1 \cdot \pi_2, \alpha_1 \sqcup \alpha_2), \]
where $\alpha_1 \sqcup \alpha_2 : V(\pi_1 \cdot \pi_2) \to D$ with $(\alpha_1 \sqcup \alpha) \restrict{V(\pi_i)} = \alpha_i$. We note that the subspace $\mathcal{K}_\times$ forms an ideal with respect to the concatenation product, therefore, it is extended to grafted and exotic forests. We extend the grafting product to the decorated aromatic forests and, by the same argument, to grafted and exotic forests.

We recall that forests, including grafted and exotic forests, are used to represent differential operators with the grafting product representing the differentiation. Let us consider the Grossman-Larson product denoted by $\gl$, which represents the composition of differential operators, that is,
\[ \pi_1 \graft (\pi_2 \graft \cdot) = (\pi_1 \gl \pi_2) \graft \cdot, \quad \text{with } \pi_1, \pi_2 \in AF_D. \]

Let us now define the Connes-Kreimer coproduct \cite{Bogfjellmo} on decorated aromatic forests.
\begin{definition}\label{def:CK}
	The Connes-Kreimer coproduct on $AF_D$ is defined as
    \[ \Delta_{CK} (\pi, \alpha) := \sum_{\pi_0 \subset \pi} (\pi \setminus \pi_0, \alpha \restrict{\pi \setminus \pi_0}) \otimes (\pi_0, \alpha \restrict{\pi_0}), \]
	where the sum runs over all rooted subforests $\pi_0 \in AF$ of $\pi$ such that $\pi \setminus \pi_0 \in AF$ and there are no edges going from $\pi_0$ to $\pi \setminus \pi_0$ in $\pi$.
\end{definition}
We recall that aromatic forests that we consider must have at least one root. This differs with \cite{Bogfjellmo} where multi-aromas are also included in $AF$. For example,
\begin{align*}
    \Delta_{CK} (\forest{(b[b]),b[b[x]]}) = &\mathbf{1} \otimes \forest{(b[b]),b[b[x]]} \  + \  \forest{b} \otimes \forest{(b),b[b[x]]} \  + \  \forest{x} \otimes \forest{(b[b]),b[b]} \  + \  \forest{b[x]} \otimes \forest{(b[b]),b} \  + \  \\
    & \forest{b,x} \otimes \forest{(b),b[b]} \  + \  \forest{b,b[x]} \otimes \forest{(b),b} \  + \  \forest{(b[b]),x} \otimes \forest{b[b]} \  + \  \forest{(b[b]),b[x]} \otimes \forest{b} \  + \  \forest{(b[b]),b[b[x]]} \otimes \mathbf{1}.
\end{align*}
We denote the dual of the Connes-Kreimer coproduct by $\ckdual$ and call it dual CK product. We extend the Grossman-Larson and dual CK products to decorated aromatic forests including grafted and exotic forests. We note that to define the Connes-Kreimer coproduct on grafted and exotic forests, we need to dualize their constructions, that is, interchange taking a subspace and taking a quotient by its complement.

\section{Combinatorial algebra framework of decorated aromatic forests}\label{sec:Asigma}

 We use decorated aromatic forests to prove combinatorial relations that are essential in our description of the algebraic structure of exotic S-series. We introduce a relationship between decorations and use it to prove the relationship between the Grossman-Larson and dual CK products on decorated aromatic forests. This is also used to describe the way exotic S-series are obtained by taking the expectation of S-series over grafted forests. 

The relationship between Grossman-Larson and dual CK product on aromatic forests was proven implicitly in \cite{Bogfjellmo} and on classical forests in \cite{Hoffman}. We present an alternative proof which is easily generalizable to more complex sets of forest, for example, grafted and exotic forests.

\subsection{Relationship between two decorations}\label{sec:rel_two_decorations}

\begin{definition}\label{def:finer_decoration}
    Let $\alpha : V(\pi) \to D$ and $\hat{\alpha} : V(\pi) \to \hat{D}$ be two decorations of an aromatic forest $\pi \in AF$. Decoration $\alpha$ is said to be \textit{finer} than $\hat{\alpha}$ if there exists a surjective map $\Phi : D \to \hat{D}$ such that $\hat{\alpha} = \Phi \circ \alpha$. 
\end{definition}
For example, let $\Phi : D_e \to D_g$ for $D_e = \{\bullet\} \sqcup \N$ and $D_g = \{\bullet, \times\}$ be the map defined as $\Phi(\bullet) := \bullet$ and $\Phi(k) = \times$ for all $k \in \N$, then, it induces a map $\Phi_\pi : (\pi, \alpha_e) \to (\pi, \alpha_g)$, e.g.
\[ \Phi_\pi (\forest{b[1,2,b[1,2]]}) = \forest{b[x,x,b[x,x]]}. \]
We note that $\Phi_\pi$ is well-defined on the equivalence classes $(\pi, \alpha_e)$ that are used to define the exotic forests. We say that the decoration of the exotic forests is finer than the decoration of the grafted forests.

\begin{definition}
    Let $p(\pi, \alpha, \hat{\alpha})$, with $\alpha$ being finer than $\hat{\alpha}$, denote the number of decorations $\tilde{\alpha}$ such that $(\pi, \tilde{\alpha}) \cong (\pi, \alpha)$ and $ \hat{\alpha} = \Phi \circ \tilde{\alpha} $ where $\Phi$ is the map such that $ \hat{\alpha} = \Phi \circ \alpha $. 
\end{definition}
If $(\pi, \alpha_e) \in EF$ is an exotic forest, then $p(\pi, \alpha_e, \alpha_g)$ is the number of ways to pair grafted vertices of $(\pi, \alpha_g) \in F_g$ to obtain a forest isomorphic to $(\pi, \alpha_e)$. 

\begin{example}
    Let us consider 
    \[ (\pi, \alpha_e) = \forest{b[1,2,b[1,2]]}, \quad (\pi, \alpha_g) = \forest{b[x,x,b[x,x]]}, \quad \text{with } \pi = \forest{b[b,b,b[b,b]]}, \]
    and find the value of $p(\pi, \alpha_e, \alpha_g)$. First, we have to find the map $\Phi$ such that $\alpha_g = \Phi \circ \alpha_e$. The map $\Phi$ is defined in the following way: $\Phi(k) := \times, \Phi(\bullet) = \bullet$ for $k \in \N$. Next, let us list the elements of the equivalence class $(\pi, \alpha_e)$: 
    \[ (\pi, \alpha_e) = \{ \forest{b[i_i,i_k,b[i_i,i_k]]} \; : \; i, k \in \N \}, \]
    and notice that the only choice for $\tilde{\alpha}_e \neq \alpha_e$ such that $(\pi, \tilde{\alpha}_e) \cong (\pi, \alpha_e)$ and $\Phi \circ \tilde{\alpha}_e = \alpha_g$ is
    \[ (\pi, \tilde{\alpha}_e) = \{ \forest{b[i_k,i_i,b[i_i,i_k]]} \; : \; i, k \in \N \}, \]
    Therefore, $p(\pi, \alpha_e, \alpha_g) = 2$.
\end{example}

\begin{proposition}\label{prop:p_generalized}
Let $\pi \in AF$ be an aromatic forest and let $\alpha : V(\pi) \to D$ be finer than $\hat{\alpha} : V(\pi) \to \hat{D}$, then
\[ p(\pi, \alpha, \hat{\alpha}) = \frac{\sigma(\pi, \hat{\alpha})}{\sigma(\pi, \alpha)}. \]
\end{proposition}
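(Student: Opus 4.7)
The plan is to prove the identity via an orbit-stabilizer argument applied to a natural action of $\Aut(\pi, \hat{\alpha})$ on the set of refinements. Let $\Phi : D \to \hat{D}$ be the surjection with $\hat{\alpha} = \Phi \circ \alpha$, and define
\[ S := \{ \tilde{\alpha} : V(\pi) \to D \, : \, (\pi, \tilde{\alpha}) \cong (\pi, \alpha) \text{ and } \Phi \circ \tilde{\alpha} = \hat{\alpha} \}, \]
so by definition $p(\pi, \alpha, \hat{\alpha}) = |S|$. The goal is then to show $|S| = \sigma(\pi, \hat{\alpha})/\sigma(\pi, \alpha)$.

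First I would let $G := \Aut(\pi, \hat{\alpha})$ act on $S$ by $\varphi \cdot \tilde{\alpha} := \tilde{\alpha} \circ \varphi^{-1}$. I need to check that this action is well-defined: since $\varphi$ is in particular an automorphism of $\pi$, one gets $(\pi, \tilde{\alpha} \circ \varphi^{-1}) \cong (\pi, \tilde{\alpha}) \cong (\pi, \alpha)$, and $\Phi \circ (\tilde{\alpha} \circ \varphi^{-1}) = \hat{\alpha} \circ \varphi^{-1} = \hat{\alpha}$ by hypothesis on $\varphi$, so indeed $\varphi \cdot \tilde{\alpha} \in S$.

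Next I would show that $G$ acts transitively on $S$. Given $\tilde{\alpha}_1, \tilde{\alpha}_2 \in S$, both are isomorphic to $(\pi, \alpha)$, hence to each other, so there exists an aromatic forest automorphism $\psi : \pi \to \pi$ with $\tilde{\alpha}_1 = \tilde{\alpha}_2 \circ \psi$, following Definition \ref{def:AF_D_morphism}. Applying $\Phi$ gives $\hat{\alpha} = \hat{\alpha} \circ \psi$, so $\psi \in G$, and then $\psi^{-1} \cdot \tilde{\alpha}_2 = \tilde{\alpha}_2 \circ \psi = \tilde{\alpha}_1$, establishing transitivity. The stabilizer of $\tilde{\alpha} \in S$ consists of those $\varphi \in G$ with $\tilde{\alpha} \circ \varphi = \tilde{\alpha}$; this is exactly $\Aut(\pi, \tilde{\alpha})$ (the condition $\hat{\alpha} \circ \varphi = \hat{\alpha}$ is automatic once $\tilde{\alpha} \circ \varphi = \tilde{\alpha}$, by composition with $\Phi$), so $|\Stab(\tilde{\alpha})| = \sigma(\pi, \tilde{\alpha}) = \sigma(\pi, \alpha)$ since isomorphic decorated forests have automorphism groups of the same size.

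The conclusion then follows immediately from the orbit-stabilizer theorem: $|S| = |G|/|\Stab(\tilde{\alpha})| = \sigma(\pi, \hat{\alpha})/\sigma(\pi, \alpha)$. The argument is essentially routine group theory; the only subtle point is checking that the stabilizer inside $G$ really coincides with $\Aut(\pi, \tilde{\alpha})$ rather than being potentially larger, which is what makes the ratio come out cleanly. I do not anticipate a genuine obstacle beyond packaging these verifications carefully.
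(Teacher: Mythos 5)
Your proof is correct and follows essentially the same route as the paper: the paper also identifies the set of refinements with the cosets of $\Aut(\pi,\alpha)$ inside $\Aut(\pi,\hat{\alpha})$, via the correspondence $\varphi \mapsto \alpha \circ \varphi$ and composition with $\Phi$, exactly as in your argument. Your orbit--stabilizer phrasing (with the check that the stabilizer is all of $\Aut(\pi,\tilde{\alpha})$, which has the same size as $\Aut(\pi,\alpha)$) is just a cleaner packaging of the paper's direct coset count $|A| = |\Aut(\pi,\hat{\alpha})/_{\Aut(\pi,\alpha)}|$.
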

\begin{proof}
We note that there exists a map $\Phi : D \to \hat{D}$ such that $\hat{\alpha} = \Phi \circ \alpha$. We note that $\Phi$ is far from being injective and, generally, there are multiple structures $\tilde{\alpha}$ that give $\hat{\alpha}$ when composed with $\Phi$ on the left. By the definition of $p(\pi, \alpha, \hat{\alpha})$, we have
\[ p(\pi, \alpha, \hat{\alpha}) = |\{ \tilde{\alpha} : V(\pi) \to D \; : \; \Phi \circ \tilde{\alpha} = \hat{\alpha} , \; (\pi, \tilde{\alpha}) \cong (\pi, \alpha) \}| =: |A|. \]
    To prove the statement we have to count the elements of the set $A$. Every element $\tilde{\alpha}$ of $A$ forms with $\pi$ a forest isomorphic to $(\pi, \alpha)$, that is, there exists $\varphi : (\pi, \tilde{\alpha}) \to (\pi, \alpha)$ such that $\tilde{\alpha} = \alpha \circ \varphi$. If we compose both sides of the equality with $\Phi$, we get
\[ \Phi \circ \tilde{\alpha} = \Phi \circ \alpha \circ \varphi, \]
    which implies that $\varphi$ is in $\Aut(\pi, \hat{\alpha})$ since $\hat{\alpha} = \Phi \circ \alpha = \Phi \circ \tilde{\alpha}$. Therefore, $A \subset \Aut(\pi, \hat{\alpha})$. Take $\varphi \in \Aut(\pi, \hat{\alpha})$, we know that $\hat{\alpha} = \hat{\alpha} \circ \varphi$. Use the fact that $\hat{\alpha} = \Phi \circ \alpha$ to get
\[ \hat{\alpha} = \Phi \circ \alpha \circ \varphi. \]
We let $\tilde{\alpha} = \alpha \circ \varphi$ and notice that $\tilde{\alpha} = \alpha$ if and only if $\varphi \in \Aut(\pi, \alpha)$. Therefore, 
\[|A| = |\Aut(\pi, \hat{\alpha})/_{\Aut(\pi, \alpha)}| = \frac{\sigma(\pi, \hat{\alpha})}{\sigma(\pi, \alpha)}, \]
and the statement is proved.
\end{proof}

\subsection{Relationship between the two products}

Let the map $A_\sigma : \AF_D \to \AF_D$ be defined as $A_{\sigma} (\pi, \alpha) = \sigma(\pi, \alpha) (\pi, \alpha)$. The map $A_\sigma$ sends a decorated aromatic forest to itself multiplied by its symmetry. We prove that the linear map $A_\sigma$ induces algebra isomorphism $A_\sigma : (\AF_D, \gl) \to (\AF_D, \ckdual)$. 

We note that an aromatic forest can be decorated by multiple sets by taking their Cartesian product. For example,
\[ (\pi, \alpha_g, \alpha_\N) := (\pi, \alpha_g \times \alpha_\N) = \forest{b_7[x_2_180,b_2[x_4]]}, \quad \text{where } \pi = \forest{b[b,b[b]]}, \quad \alpha_g \times \alpha_\N: V(\pi) \to D_g \times \N. \]

We define a set of labeled decorated aromatic forests $AF_{DL}$ and the corresponding space $\AF_{DL}$. We note that labels, unlike decorations, are required to be in bijection with the vertices. The elements $(\pi, \alpha, \alpha_l)$ of $AF_{DL}$ are aromatic forests $\pi \in AF$ decorated by $\alpha : V(\pi) \to D$ for some set $D$ and $\alpha_l : V(\pi) \to \N$ such that $\alpha_l$ is an injection. 

Let us define the Grossman-Larson and dual CK products on the space $\AF_{DL}$ by considering the space $\AF_{D\N}$ of elements $(\pi, \alpha, \alpha_{\N})$ where $(\pi, \alpha) \in AF_D$ and $\alpha_\N : V(\pi) \to \N$. Let us define a subspace $\mathcal{K}$ which is an ideal in $(\AF_{D\N}, \gl)$ and $(\AF_{D\N}, \ckdual)$:
\[ \mathcal{K} := \text{span} (K) = \text{span} \{ (\pi, \alpha, \alpha_\N) \in AF_{D\N} \; : \; \exists k \in \N \text{ such that } |\alpha_\N^{-1}(k)| > 1 \}. \]
Then, the corresponding algebras $(\AF_{DL}, \gl)$ and $(\AF_{DL}, \ckdual)$ are defined as
\[ (\AF_{DL}, \gl) := (\AF_{D\N}, \gl) /_{\mathcal{K}}, \quad (\AF_{DL}, \ckdual) := (\AF_{D\N}, \ckdual) /_{\mathcal{K}}. \]
Then, the Grossman-Larson product and the product dual to the Connes-Kreimer coproduct are identical on the space $\AF_{DL}$. For example,
\[ \forest{t_1[t_2]} \gl \forest{t_3[t_4,t_5]} = \forest{t_1[t_2]} \ckdual \forest{t_3[t_4,t_5]} = \forest{t_1[t_2],t_3[t_4,t_5]} + \forest{t_3[t_1[t_2],t_4,t_5]} + \forest{t_3[t_4[t_1[t_2]],t_5]} + \forest{t_3[t_4,t_5[t_1[t_2]]]}, \]
\[ \forest{t_1} \gl \forest{t_3[t_1]} = \forest{t_1} \ckdual \forest{t_3[t_1]} = 0. \]

Let us denote by $\widetilde{\AF}_{DL}$ the completion of the space $\AF_{DL}$ with respect to the graduation given by the number of vertices in the aromatic forests. That is, $\widetilde{\AF}_{DL}$ is a space of formal sums of the form
\[ \sum_{\pi \in AF_{DL}} a(\pi) \pi, \quad \text{with } a \in \AF_{DL}^*.  \]

\begin{proposition}\label{prop:AF_into_AFuD}
    Define the maps $\varphi$ and $\hat{\varphi}$ as 
    \begin{align*}
        \varphi : (\AF_D, \gl) \to (\widetilde{\AF}_{DL}, \gl), \quad &\varphi(\pi, \alpha) := \sum_{\alpha_l} (\pi, \alpha, \alpha_l), \quad \text{and} \\
        \hat{\varphi} : (\AF_D, \ckdual) \to (\widetilde{\AF}_{DL}, \ckdual), \quad &\hat{\varphi}(\pi, \alpha) := \sum_{[\alpha_l]} (\pi, \alpha, \alpha_l), 
    \end{align*}
    where $\varphi(\pi, \alpha)$ is the sum are over all labelings $\alpha_l$ and $\hat{\varphi}(\pi, \alpha)$ is the sum are over all labelings $\alpha_l$ such that all terms of the sum are distinct. Then, the maps $\varphi$ and $\hat{\varphi}$ are injective algebra morphisms.
\end{proposition}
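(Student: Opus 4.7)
The plan is to verify injectivity and the algebra morphism property separately, exploiting the fact, already noted in the text, that on $\AF_{DL}$ the Grossman-Larson product and the dual Connes-Kreimer product coincide. Once both sides are expanded as formal sums of labeled decorated aromatic forests we are therefore comparing sums built with the very same product, which removes the main subtlety and reduces everything to combinatorial counting.

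For injectivity, I would observe that every term in $\varphi(\pi, \alpha) = \sum_{\alpha_l} (\pi, \alpha, \alpha_l)$ has underlying decorated aromatic forest equal to $(\pi, \alpha)$, and likewise for $\hat{\varphi}(\pi, \alpha)$. Thus distinct $(\pi_1, \alpha_1), (\pi_2, \alpha_2) \in AF_D$ produce images with disjoint supports in $AF_{DL}$, so the images of a basis of $\AF_D$ remain linearly independent in $\widetilde{\AF}_{DL}$, which forces both maps to be injective after passage to the completion.

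For the morphism property of $\varphi$, I would expand
\[
    \varphi(\pi_1, \alpha_1) \gl \varphi(\pi_2, \alpha_2) = \sum_{\alpha_{l,1},\, \alpha_{l,2}} (\pi_1, \alpha_1, \alpha_{l,1}) \gl (\pi_2, \alpha_2, \alpha_{l,2}),
\]
notice that only pairs $(\alpha_{l,1}, \alpha_{l,2})$ with disjoint images survive modulo $\mathcal{K}$, and for such pairs the product on labeled forests is the clean sum over graftings. Then, for a fixed target labeled forest $(\rho, \beta, \beta_l)$, the labeling $\beta_l$ rigidly splits the vertex set of $\rho$ into the part coming from $\pi_1$ and the part coming from $\pi_2$, so each contribution corresponds to a unique grafting datum and each such datum appears with coefficient $1$. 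On the right-hand side, $\varphi((\pi_1, \alpha_1) \gl (\pi_2, \alpha_2))$ unpacks each decorated term into its labelings, and the multiplicity of $(\rho, \beta)$ in $(\pi_1, \alpha_1) \gl (\pi_2, \alpha_2)$ is exactly counted by the number of isomorphism classes of such splittings. The coefficients for $(\rho, \beta, \beta_l)$ agree because fixing $\beta_l$ eliminates the redundancy that produced multiplicities in the decorated product.

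The argument for $\hat{\varphi}$ and $\ckdual$ is formally the same, since $\gl = \ckdual$ on $\AF_{DL}$; the only change is that labelings are summed over their $\Aut(\pi, \alpha)$-orbits instead of individually. The principal technical point, and what I expect to be the main obstacle, is checking that the bijection above descends to these orbits: one must confirm that automorphisms of $(\pi_1, \alpha_1)$ and $(\pi_2, \alpha_2)$ act coherently on the pair $(\alpha_{l,1}, \alpha_{l,2})$ and on the induced $\Aut(\rho, \beta)$-orbit of $\beta_l$. This orbit-level bookkeeping is of the same flavor as the counting used in Proposition~\ref{prop:p_generalized} and should yield the morphism property for $\hat{\varphi}$ after the same kind of careful verification.
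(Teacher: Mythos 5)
Your injectivity argument and your direct counting proof that $\varphi$ respects $\gl$ are sound, and they essentially supply the details the paper leaves implicit ("we can see that $\varphi$ is indeed an injective morphism"): labels rigidify the forests, so after discarding the terms killed in the quotient by $\mathcal{K}$, each grafting datum contributes a distinct labeled forest with coefficient $1$, and the coefficients match those produced by $\varphi$ applied to $(\pi_1,\alpha_1)\gl(\pi_2,\alpha_2)$.

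The gap is in the $\hat{\varphi}$ half, which is the harder half and the one you defer. It is not "formally the same" argument with labelings replaced by $\Aut$-orbits: the two sides change in different ways. On the left, replacing the sum over all labelings by a sum over distinct labeled forests removes a factor $\sigma(\pi_1,\alpha_1)\sigma(\pi_2,\alpha_2)$ relative to the $\varphi$ computation; on the right, the product is now $\ckdual$ on decorated (unlabeled) forests, whose coefficients are not those of $\gl$ but differ from them by precisely the symmetry factors $\sigma(\rho,\beta)/\sigma(\pi_1,\alpha_1)\sigma(\pi_2,\alpha_2)$ --- indeed, that GL and dual CK on $\AF_D$ are intertwined by $A_\sigma$ is Proposition \ref{prop:Asigma_general}, which is proved \emph{from} the present proposition, so any "orbit-level bookkeeping" that quietly invokes $\varphi=\hat{\varphi}\circ A_\sigma$ or the GL/dual-CK comparison would be circular. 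Carrying out the count directly amounts to redoing the orbit--stabilizer argument of Proposition \ref{prop:p_generalized} inside the product, which is exactly the step you label "the main obstacle" and do not perform. The paper avoids all of this with a different idea: it passes to the dual. Since each distinct labeled forest occurs in $\hat{\varphi}(\pi,\alpha)$ with coefficient $1$, the adjoint $\hat{\varphi}^*:(\widetilde{\AF}_{DL},\Delta_{CK})\to(\AF_D,\Delta_{CK})$ is simply the label-forgetting map $(\pi,\alpha,\alpha_l)\mapsto(\pi,\alpha)$, which is visibly a surjective coalgebra morphism (forgetting labels commutes with taking admissible subforests), and dualizing back gives that $\hat{\varphi}$ is an injective algebra morphism for $\ckdual$ with no counting at all. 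To complete your proposal you would either need to carry out the orbit count from scratch (without appealing to Proposition \ref{prop:Asigma_general}) or adopt this duality argument.
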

\begin{example}
    Let $\varphi$ and $\hat{\varphi}$ be the maps defined in Proposition \ref{prop:AF_into_AFuD}, then,
    \[ \varphi(\forest{b[b,b]}) = \sum_{k, i, j \in \mathbb{N}} \forest{t_k[t_i,t_j]}, \quad \hat\varphi(\forest{b[b,b]}) = \sum_{k, i < j \in \mathbb{N}} \forest{t_k[t_i,t_j]}. \]
\end{example}
\begin{proof}
    We can see that the map $\varphi : (\AF_D, \gl) \to (\widetilde{\AF}_{DL}, \gl)$ is indeed an injective morphism. To prove that the map $\hat{\varphi} : (\AF_D, \ckdual) \to (\widetilde{\AF}_{DL}, \ckdual)$ is an injective morphism, we consider its dual. The dual of $\hat{\varphi}$ is the map $\hat{\varphi}^* : (\widetilde{\AF}_{DL}, \Delta_{CK}) \to (\AF_D, \Delta_{CK})$ with $\hat{\varphi}^* (\pi, \alpha, \alpha_l) = (\pi, \alpha)$. We see that the dual is a surjective coalgebra morphism. Therefore, $\hat{\varphi}$ is an injective algebra morphism.
\end{proof}

\begin{proposition}\label{prop:Asigma_general}
    The map $A_\sigma : (\AF_D, \gl) \to (\AF_D, \ckdual)$ is an algebra isomorphism.
\end{proposition}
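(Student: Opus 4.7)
The plan is to deduce the statement from Proposition \ref{prop:AF_into_AFuD} together with Proposition \ref{prop:p_generalized}, by showing that $A_\sigma$ intertwines the two injective morphisms $\varphi$ and $\hat{\varphi}$ into the common ambient space $\widetilde{\AF}_{DL}$, in which $\gl$ and $\ckdual$ coincide.

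The key preliminary observation is the identity $\varphi = \hat{\varphi} \circ A_\sigma$. To see this, fix $(\pi,\alpha) \in AF_D$ and regard the product decoration $\alpha \times \alpha_l$ as a finer decoration than $\alpha$ in the sense of Definition \ref{def:finer_decoration}. Since $\alpha_l$ is required to be an injection, every automorphism of $(\pi,\alpha\times\alpha_l)$ must fix every vertex, so $\sigma(\pi,\alpha\times\alpha_l)=1$. Proposition \ref{prop:p_generalized} then yields
\[ p(\pi,\alpha\times\alpha_l,\alpha) = \frac{\sigma(\pi,\alpha)}{\sigma(\pi,\alpha\times\alpha_l)} = \sigma(\pi,\alpha), \]
so the natural surjection from labelings to isomorphism classes of labeled decorated forests is $\sigma(\pi,\alpha)$-to-one. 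Comparing the two defining sums, this means $\varphi(\pi,\alpha) = \sigma(\pi,\alpha)\,\hat{\varphi}(\pi,\alpha)$, i.e. $\varphi = \hat{\varphi}\circ A_\sigma$.

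With this identity in hand, the multiplicativity of $A_\sigma$ follows by a formal chase: for $x,y \in \AF_D$,
\[ \hat{\varphi}(A_\sigma(x\gl y)) = \varphi(x\gl y) = \varphi(x)\gl \varphi(y) = \varphi(x)\ckdual \varphi(y) = \hat{\varphi}(A_\sigma(x))\ckdual \hat{\varphi}(A_\sigma(y)) = \hat{\varphi}\bigl(A_\sigma(x)\ckdual A_\sigma(y)\bigr), \]
using that $\varphi$ is a morphism for $\gl$, that $\hat{\varphi}$ is a morphism for $\ckdual$, and that the two products agree on $\widetilde{\AF}_{DL}$. Injectivity of $\hat{\varphi}$ from Proposition \ref{prop:AF_into_AFuD} then gives $A_\sigma(x\gl y) = A_\sigma(x)\ckdual A_\sigma(y)$. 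Since $\sigma(\pi,\alpha)\geq 1$ for every basis element, $A_\sigma$ admits the linear inverse $(\pi,\alpha)\mapsto \sigma(\pi,\alpha)^{-1}(\pi,\alpha)$, so it is an algebra isomorphism.

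The main obstacle is the bookkeeping behind $\varphi = \hat{\varphi}\circ A_\sigma$: one must verify that Proposition \ref{prop:p_generalized} really does apply to the pair $(\alpha\times\alpha_l,\alpha)$ and that the resulting count of $\sigma(\pi,\alpha)$ labelings per equivalence class is exactly the multiplicity with which a given labeled class appears inside $\varphi(\pi,\alpha)$. Once this combinatorial step is settled, the rest of the proof is the formal diagram chase above, and it works uniformly for every decoration set $D$, so the same argument will apply when we later specialize $D$ to grafted or exotic forests.
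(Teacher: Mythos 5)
Your proposal is correct and follows essentially the same route as the paper: establish $\varphi = \hat{\varphi} \circ A_\sigma$ via Proposition \ref{prop:p_generalized} with $\sigma(\pi,\alpha,\alpha_l)=1$, then transfer the identity $\gl = \ckdual$ on $\widetilde{\AF}_{DL}$ back through the injective morphisms of Proposition \ref{prop:AF_into_AFuD} (the paper cancels via injectivity of $\varphi$, you via injectivity of $\hat{\varphi}$ — an immaterial difference).
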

\begin{proof}
    Let us consider the maps $\varphi : (\AF_D, \gl) \to (\widetilde{\AF}_{DL}, \gl)$ and $\hat{\varphi} : (\AF_D, \ckdual) \to (\widetilde{\AF}_{DL}, \ckdual)$ from the Proposition \ref{prop:AF_into_AFuD}. We show that $\varphi = \hat{\varphi} \circ A_\sigma$ and use this fact as a key ingredient of the proof. We have
    \[ \varphi (\pi, \alpha) = \sum_{\alpha_l} (\pi, \alpha, \alpha_l) = \sum_{[\alpha_l]} p(\pi, \alpha, \alpha \times \alpha_l) (\pi, \alpha, \alpha_l) = \sum_{[\alpha_l]} \frac{\sigma(\pi, \alpha)}{\sigma(\pi, \alpha, \alpha_l)} (\pi, \alpha, \alpha_l), \]
    where $p(\pi, \alpha, \alpha \times \alpha_l)$ is the number of ways to obtain $(\pi, \alpha, \alpha_l)$ from $(\pi, \alpha)$. We use Proposition \ref{prop:p_generalized} and we note that $\sigma(\pi, \alpha, \alpha_l) = 1$ due to the definition of a labeling. Therefore,
    \[ \varphi (\pi, \alpha) = \sum_{[\alpha_l]} \sigma(\pi, \alpha) (\pi, \alpha, \alpha_l) = (\hat{\varphi} \circ A_\sigma) (\pi, \alpha). \]
    We use this property together with the fact that $\gl = \ckdual$ in $\AF_{DL}$ to show that
    \[ \varphi \circ \gl = \ckdual \circ (\varphi \otimes \varphi) = \hat{\varphi} \circ \ckdual \circ (A_\sigma \otimes A_\sigma) = \varphi \circ A^{-1}_\sigma \circ \ckdual \circ (A_\sigma \otimes A_\sigma). \]
    We use the injectivity of $\varphi$ to finish the proof.
\end{proof}

We note that Propoposition \ref{prop:Asigma_general} is proved for any Grossman-Larson and dual CK algebras over decorated aromatic forests, including the cases $\AF_D = \AF_g$ and $\AF_D = \EF$, that is, of grafted and exotic forests.

\section{Exotic S-series}\label{sec:exotic_S-series}

Let us introduce S-series over decorated aromatic forests. We denote by $\widetilde{\AF}_D$ the space of formal sums of the form
\[ \sum_{\pi \in AF_D} a(\pi) \pi, \quad \text{with } a \in \AF_D^*.  \]
It is the completion with respect to the graduation given by the number of vertices.
Let $\delta_\sigma : \AF_D^* \to \widetilde{\AF}_D$ be the isomorphism between the dual and the completion given by
\[ \delta_\sigma (a) = \sum_{\pi \in AF_D} \frac{a(\pi)}{\sigma(\pi)} \pi. \]
We assume that the map $F_f$ (Definition \ref{def:F_f_classic}) is defined over decorated aromatic forests and is a Grossman-Larson algebra morphism,
\[ F_f(\pi_1 \gl \pi_2) [\cdot] = F_f(\pi_1) \big[F_f(\pi_2) [\cdot]\big], \quad \text{for } \pi_1, \pi_2 \in \AF_D. \]
\begin{definition}
    S-series over decorated aromatic forests are defined as $S := F_f \circ \delta_\sigma$, that~is,
    \[ S(a) = \sum_{\pi \in AF_D} \frac{a(\pi)}{\sigma(\pi)} F_f (\pi). \]
\end{definition}

\begin{theorem}\textbf{(composition law)} $\;$ \label{thm:composition_law}
    Let $S(a)$ and $S(b)$ be two S-series and let $\phi$ be a test function. Then,
    \[ S(a)[S(b)[\phi]] = S(a * b)[\phi], \]
    where $a * b = m_\mathbb{R} \circ (a \otimes b) \circ \Delta_{CK}$.
\end{theorem}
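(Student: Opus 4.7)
The plan is to expand both sides in the basis of decorated aromatic forests and reduce the identity to the structural relation between the Grossman-Larson product and the dual Connes-Kreimer product established in Proposition \ref{prop:Asigma_general}.

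First, by bilinearity and the Grossman-Larson morphism property assumed for $F_f$, I would write
\begin{equation*}
    S(a)[S(b)[\phi]] = \sum_{\pi_1, \pi_2 \in AF_D} \frac{a(\pi_1)\, b(\pi_2)}{\sigma(\pi_1)\sigma(\pi_2)} F_f(\pi_1 \gl \pi_2)[\phi].
\end{equation*}
Introducing structure constants through $\pi_1 \gl \pi_2 = \sum_\pi c^\pi_{\pi_1,\pi_2} \pi$ and $\pi_1 \ckdual \pi_2 = \sum_\pi \hat{c}^\pi_{\pi_1,\pi_2} \pi$, the isomorphism $A_\sigma : (\AF_D, \gl) \to (\AF_D, \ckdual)$ of Proposition \ref{prop:Asigma_general}, applied to $A_\sigma(\pi_1 \gl \pi_2) = A_\sigma(\pi_1) \ckdual A_\sigma(\pi_2)$, yields after comparing the coefficient of $\pi$ on both sides the pointwise identity
\begin{equation*}
    \sigma(\pi)\, c^\pi_{\pi_1, \pi_2} = \sigma(\pi_1)\,\sigma(\pi_2)\, \hat{c}^\pi_{\pi_1, \pi_2}.
\end{equation*}

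Second, I would regroup the double sum by $\pi$ and substitute this identity to obtain
\begin{equation*}
    S(a)[S(b)[\phi]] = \sum_{\pi \in AF_D} \frac{F_f(\pi)[\phi]}{\sigma(\pi)} \sum_{\pi_1, \pi_2} a(\pi_1)\, b(\pi_2)\, \hat{c}^\pi_{\pi_1, \pi_2}.
\end{equation*}
By the duality between $\Delta_{CK}$ and $\ckdual$, the numbers $\hat{c}^\pi_{\pi_1, \pi_2}$ are exactly the multiplicities with which $\pi_1 \otimes \pi_2$ appears in $\Delta_{CK}(\pi)$, so the inner sum equals $(a \otimes b)(\Delta_{CK}(\pi)) = (a * b)(\pi)$. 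Recognising the resulting expression as the definition of $S(a * b)[\phi]$ finishes the computation.

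The main subtlety is the initial mismatch between the tree-level product arising from composing two $F_f$-images, which is the Grossman-Larson product $\gl$, and the convolution $a * b$, which is naturally expressed through $\ckdual$ via duality with $\Delta_{CK}$. Reconciling the two requires careful bookkeeping of the symmetry factors $\sigma(\pi)$ in the denominators of the S-series, and this is precisely what $A_\sigma$ absorbs. Once that identification is in place, the remainder of the argument is a routine reorganisation of sums, and the proof extends uniformly to any decoration set $D$, in particular to grafted and exotic forests.
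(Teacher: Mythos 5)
Your proof is correct and takes essentially the same approach as the paper: the structure-constant identity $\sigma(\pi)\,c^\pi_{\pi_1,\pi_2} = \sigma(\pi_1)\sigma(\pi_2)\,\hat{c}^\pi_{\pi_1,\pi_2}$ you derive from Proposition \ref{prop:Asigma_general} is precisely the coefficient-level form of the identity $\delta_\sigma \circ * = \gl \circ (\delta_\sigma \otimes \delta_\sigma)$ that the paper invokes. Combined with the Grossman--Larson morphism property of $F_f$, your argument is the paper's proof written out explicitly in the forest basis.
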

\begin{proof}
    We use identity $\delta_\sigma \circ * = \gl \circ (\delta_\sigma \otimes \delta_\sigma)$, which follows from Proposition \ref{prop:Asigma_general}, and the definition of the Grossman-Larson product.
\end{proof}

We are interested in studying how the structures that were originally defined on deterministic differential equations are affected when considered in a stochastic context. Similarly to the classical Runge-Kutta methods which can be written as B-series, sRK methods (Definition \ref{RK_form}) can be written using B-series over grafted trees \cite{Laurent_2019}. Composing sRK methods between themselves or with a test function gives rise to S-series over grafted forests. For example, let $\Psi_h$ be an sRK method and $I : \mathbb{R}^d \to \mathbb{R}^l$, then,
\[ \Psi_h(y_0) = y_0 + \sum_{\tau \in AT_g} \frac{a(\tau)}{\sigma(\tau)} F_f (\tau)(y_0), \quad \text{and} \quad I \circ \Psi_h = I + \sum_{\pi \in AF_g}\frac{a(\pi)}{\sigma(\pi)}F_f(\pi)[I], \]
with the functional $a : AF_g \to \mathbb{R}$ defined in Proposition \ref{prop:RK_a}.
Differential operators $F_f(\pi)$ corresponding to grafted forests are defined by extending Definition \ref{def:F_f_classic} with $F_f (\times) = \sqrt{2h} \xi$ where $\xi \sim \mathcal{N}(0, I_d)$. For example, 
\[ F_f(\forest{b[b[x],b],x,x}) = h^{4.5} 2 \sqrt{2} \sum_{i,j,k=1}^d \xi^i (\partial_i f^j) f^k (\partial_{jk} f) \xi \xi. \] 
The map $F_f$ defined this way over $\AF_g$ is an algebra morphism, therefore, the composition law for S-series over grafted forests is given by Theorem \ref{thm:composition_law}.

The functional $a : \AF_g \to \mathbb{R}$ corresponding to an sRK method with coefficients $b_i, d_i, a_{ij}$ with $i, j = 1, \dots, s$ is defined analogously to the classical case, that is, as a sum over sRK coefficients with $b_i$ corresponding to the black roots, $a_{ij}$ corresponding to black vertices which are not roots, and $d_i$ corresponding to the grafted vertices. Grafted roots coorespond to $1$. The coefficient is $0$ on all grafted forests that contain an aroma.

\begin{proposition}\label{prop:RK_a}
    Let $S(a)$ be the S-series of a stochastic Runge-Kutta method with coefficients $b_i, a_{ij}, d_i$ for $i, j \in [s]$, then the map $a : AF_g \to \mathbb{R}$ is $0$ on $AF_g \setminus F_g$ and 
    \[ a(\pi) := \sum_{\alpha : V(\pi) \to [s]} \prod_{\substack{v \in V_\bullet (\pi) \\ s(v) = \emptyset}} b_{\alpha(v)} \cdot \prod_{\substack{v \in V_\bullet (\pi) \\ u \in s(v)}} a_{\alpha(u), \alpha(v)} \cdot \prod_{\substack{v \in V_\times (\pi) \\ u \in s(v)}} d_{\alpha(u)} , \quad \text{for } \pi \in F_g, \]
    where $F_g$ is the set of grafted forests without aromas, $V_c(\pi)$ are the vertices of $\pi$ of color $c$, and $s(v)$ are the successors of $v$ in $\pi$.
\end{proposition}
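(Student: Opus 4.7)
The approach is to adapt Butcher's combinatorial calculus to the stochastic setting by unfolding the Runge--Kutta recursion. Introduce intermediate functionals $c_i : AT_g \to \mathbb{R}$ so that each internal stage admits a B-series
\[ Y_i - X_n = \sum_{\tau \in AT_g} \frac{c_i(\tau)}{\sigma(\tau)} F_f(\tau)(X_n), \]
and prove by induction on $|\tau|$ that $c_i$ is given by the same sum-over-decorations as $a$, but with the root weight $b_{\alpha(r)}$ replaced by $a_{i,\alpha(r)}$. The base cases $c_i(\forest{b}) = \sum_j a_{ij}$ and $c_i(\forest{x}) = d_i$ read off directly from the defining equation
\[ Y_i = X_n + h\sum_j a_{ij} f(Y_j) + d_i \sqrt{2h}\,\xi_n. \]

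For the inductive step, Taylor-expand $f(Y_j)$ around $X_n$ and substitute the B-series of each $Y_j - X_n$. If $\tau$ has black root $r$ and immediate subtrees $\tau_1, \ldots, \tau_n$ grouped by isomorphism type with multiplicities $m_1, \ldots, m_l$, the Taylor denominator $1/n!$ combines with the multinomial $n!/\prod_v m_v!$ counting the orderings of the substituted subtrees to produce, after using the standard symmetry identity $\sigma(\tau) = \prod_v m_v!\,\sigma(\tau_v)^{m_v}$ (valid for grafted trees, which carry no lianas), the recursion
\[ c_i(\tau) = \sum_{j \in [s]} a_{ij} \prod_v c_j(\tau_v)^{m_v}. \]
Grafted-leaf children contribute the factors $d_{j}$ by exactly the same accounting. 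Unfolding this recursion layer by layer gives the claimed closed form for $c_i$.

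Passing from $c_i$ to $a$ is then done by applying the outer update $X_{n+1} = X_n + h\sum_i b_i f(Y_i) + \sqrt{2h}\,\xi_n^{(1)}$: expanding $f(Y_i)$ as above replaces the root weight $a_{i,\alpha(r)}$ by $b_{\alpha(r)}$ and yields the stated formula on black-rooted trees, while the solitary noise term gives $a(\forest{x}) = 1$. Multiplicativity $a(\pi_1 \cdot \pi_2) = a(\pi_1)a(\pi_2)$ extending $a$ to forests is automatic from the standard B-series-to-S-series relation $y_0 + B(a)(y_0) = S(a)[\text{Id}](y_0)$ (and is visibly built into the product formula itself, as the relevant products factor over connected components). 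Vanishing of $a$ on forests with aromas is immediate, since the Taylor expansion of $f(Y_j)$ never generates divergence terms. The main technical obstacle is the symmetry bookkeeping, which is classical in the deterministic case but must be verified afresh in the presence of the grafted leaves produced by the noise term.
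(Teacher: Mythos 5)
Your proposal is correct and takes essentially the same approach as the paper: the paper's own proof consists of invoking the classical Butcher-style stage-by-stage derivation (``a straightforward extension of the theory presented in \cite{GeomIntBook} in Chapter III.1.1'') together with the remark that the Taylor expansion of the method produces no aroma terms, and your induction on the stage functionals $c_i$ with $c_i(\times)=d_i$, the recursion $c_i(\tau)=\sum_j a_{ij}\prod_v c_j(\tau_v)^{m_v}$, and the symmetry identity $\sigma(\tau)=\prod_v m_v!\,\sigma(\tau_v)^{m_v}$ is exactly that extension written out in detail. Nothing in your argument conflicts with the paper's (cited) proof.
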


We recall that all edges are directed towards the roots of the corresponding connected components. Some values of $a : AF_g \to \mathbb{R}$ corresponding to an sRK method with coefficients $b_i, a_{ij}, d_i$ for $i,j \in [s]$ are 
\[ a(\forest{x}) = 1, \quad a(\forest{b}) = \sum_{i=1}^s b_i, \quad a(\forest{b[x]}) = \sum_{i=1}^s b_i d_i \quad a(\forest{b[b]}) = \sum_{i,j=1}^s b_i a_{ij}. \]
Some values of $a$ for bigger forests are:
\[ a(\forest{b[x,b],x}) = \sum_{i,j=1}^s b_i d_i a_{ij}, \quad a(\forest{b[b,x,b[b,x]]}) = \sum_{i,j,k,l=1}^s b_i a_{ij} d_i a_{ik} a_{kl} d_k, \quad a(\forest{b[x],b[b]}) = \sum_{i,j,k = 1}^s b_i d_i b_j a_{jk}.  \]
The proof of Proposition \ref{prop:RK_a} is a straightforward extension of the theory presented in \cite{GeomIntBook} in Chapter III.1.1. 
The functional $a : AF_g \to \mathbb{R}$ corresponding to an S-series for sRK method is $0$ on all grafted forests that contain aromas due to the fact that the Taylor expansion of $I(\Psi_h(y_0))$ around $y_0$ does not produce differentials that would correspond to aromas.

\subsection{From grafted to exotic forests}

Since we are interested in studying the order conditions with respect to the invariant measure, we consider the expansion of $\E[\phi(y_0 + B(a)(y_0))]$ which can be written as $\E[S(a)[\phi]]$ where $S(a)$ is an S-series over grafted forests. Let us consider how the expectation acts on the differential operators corresponding to grafted forests~\cite{Laurent_2019}.
From the definition of $F_f$ on grafted forests, it follows that the expectation depends only on the grafted vertices, i.e. on terms of the form $\mathbb{E} (\xi_{i_1} \cdots \xi_{i_m})$. 
We know that the expectation is $0$ for $m$ odd, thus, we consider $\mathbb{E} (\xi_{i_1} \cdots \xi_{i_{2n}})$. We know that $\mathbb{E}(\xi_i \xi_j) = \mathbb{E} (\xi_i) \mathbb{E} (\xi_j) $ if $i \neq j$, therefore, the indices must have even multiplicities. For example,
\begin{align*}
\E [F_f(\forest{b[x,b[x]]})] = &\E [ 2 h^3 \sum_{i,j,k=1}^d \xi^i \partial_i f^j \xi^k \partial_{k,j} f] = 2 h^3 \sum_{i,j,k=1}^d \E [\xi^i \xi^k] \partial_i f^j \partial_{k,j} f  = \\ 
& 2 h^3 \sum_{i,j=1}^d \E [\xi^i \xi^i] \partial_i f^j \partial_{i,j} f = 2 h^3 \sum_{i,j=1}^d \partial_i f^j \partial_{i,j} f.
\end{align*}
We notice that the expectation forces $i$ and $k$, which are indices corresponding to the grafted vertices, to be equal. This creates a pairing between the grafted vertices, this pairing is named \textit{liana} and the resulting vector field corresponds to the vector field of an exotic forest. Therefore, the expectation of an S-series over grafted forests is an exotic S-series, i.e. S-series over exotic forest.

Let $n \in \N$ be a natural number and let $e_1, \dots, e_n$ denote the standard basis of $\mathbb{R}^n$. Let $g : \mathbb{R}^d \to \mathbb{R}^d$ be a vector field, then $g(x) = \sum_{i=1}^\infty g^i (x) e_i$ with $g^i = 0$ for all $i > d$. Let $g(x) = \sqrt{2h} \mathbf{1} \in \mathbb{R}^d$ where $\mathbf{1}$ is a vector of ones. 
The map $F_f$ is defined over exotic forests by extending Definition \ref{def:F_f_classic} with $F_f(k) = g^k e_k$ for $k \in \mathbb{N}$ using the construction (\ref{eq:exotic_def}) of exotic forests as a subspace of $\widetilde{\AF}_{\bullet \mathbb{N}}$.
We use the example we have already seen to illustrate the definition:
\[ F_f(\forest{b[1,b[1]]}) = 2 h^3 \sum_{i,j,l=1}^d \partial_i f^j \partial_{i,j} f^l \partial_l. \]
The map $F_f$ defined this way over $\EF$ is an algebra morphism, therefore, the composition law for exotic S-series is given by Theorem \ref{thm:composition_law}.
Theorem \ref{thm:Laurent} is proved in \cite{Laurent_2019} and follows from the Isserlis Theorem \cite{Isserlis_18}.
\begin{theorem}\label{thm:Laurent}\cite{Laurent_2019}
    Let $(\pi, \alpha_g) \in AF_g$ be a grafted forest with an even number of grafted vertices. Then, the expectation of $F_f(\pi, \alpha_g)$ is given by
    \[ \E [F_f(\pi, \alpha_g)] = \sum_{\alpha_e} F_f(\pi, \alpha_e), \]
    where the sum is over all decorations $\alpha_e$ with $\alpha_e^{-1} (\bullet) = \alpha_g^{-1} (\bullet)$ where $\alpha_e$ is defined by (\ref{eq:exotic_def}).
\end{theorem}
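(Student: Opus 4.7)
The plan is to make the dependence on the Gaussian vector $\xi$ explicit in $F_f(\pi, \alpha_g)$ and then apply Isserlis' theorem. By the extension of Definition \ref{def:F_f_classic} to grafted forests with $F_f(\times) = \sqrt{2h}\,\xi$, expanding the sum over index decorations $\alpha : V(\pi) \to [d]$ separates cleanly into a deterministic factor and a Gaussian factor:
\[
 F_f(\pi, \alpha_g) \;=\; \sum_{\alpha:V(\pi)\to[d]} C_\alpha(\pi, \alpha_g) \cdot \prod_{v \in \alpha_g^{-1}(\times)} \sqrt{2h}\,\xi^{\alpha(v)},
\]
where $C_\alpha(\pi,\alpha_g)$ collects the vector-field components and partial derivatives attached to black vertices according to Definition \ref{def:F_f_classic}. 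Because the $C_\alpha$ are deterministic, the expectation commutes with the outer sum and acts only on the product of Gaussian components indexed by the grafted vertices.

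Next I would apply Isserlis' theorem \cite{Isserlis_18} to the Gaussian factor. Writing $V_\times := \alpha_g^{-1}(\times)$, since $|V_\times|$ is even by assumption,
\[
 \E\Bigl[\prod_{v \in V_\times} \xi^{\alpha(v)}\Bigr] \;=\; \sum_{M \in \mathrm{PM}(V_\times)} \prod_{\{u,w\} \in M} \delta_{\alpha(u),\,\alpha(w)},
\]
where $\mathrm{PM}(V_\times)$ is the set of perfect matchings of $V_\times$. Swapping the two sums collapses, for each matching $M$, the independent index assignment on $V_\times$ into one that is constant on each pair of $M$.

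The key identification is then combinatorial. A perfect matching $M$ of $V_\times$, together with the fixed assignment $\alpha_e^{-1}(\bullet) := \alpha_g^{-1}(\bullet)$, is exactly the data of a decoration $\alpha_e : V(\pi) \to \{\bullet\}\sqcup\N$ with $|\alpha_e^{-1}(k)| = 2$ for each $k$ used, where the two vertices of every pair share a common natural number (a liana). The collapsed inner sum for fixed $M$ ranges precisely over the set $P(\alpha_e)$ appearing in \eqref{eq:exotic_def}, so it reproduces the image of the corresponding exotic forest under $F_f$, using $F_f(k) = g^k e_k$ with $g = \sqrt{2h}\,\mathbf{1}$.

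The main obstacle, and the step that deserves the most care, will be the constant bookkeeping: verifying that one factor of $\sqrt{2h}$ per grafted vertex combines with $\E[\xi^i \xi^j] = \delta_{ij}$ into exactly the factor $2h$ per liana dictated by $g = \sqrt{2h}\,\mathbf{1}$, and that the sum over perfect matchings $M$ matches the sum over exotic refinements $\alpha_e$ of $\alpha_g$ written in the statement. Once this matching-versus-refinement dictionary is set up, the equality $\E[F_f(\pi,\alpha_g)] = \sum_{\alpha_e} F_f(\pi,\alpha_e)$ follows term by term, and the proof is complete.
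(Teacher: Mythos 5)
Your proposal is correct and follows essentially the same route the paper relies on: the paper defers the proof to \cite{Laurent_2019}, noting it follows from the Isserlis theorem, and its illustrative computation (the expectation acting only on the Gaussian factors attached to grafted vertices, forcing indices to coincide pairwise and thus creating lianas) is exactly your separation into a deterministic factor times a Gaussian product, application of Isserlis, and identification of perfect matchings of $\alpha_g^{-1}(\times)$ with the decorations $\alpha_e$ of \eqref{eq:exotic_def}. The constant bookkeeping you flag ($\sqrt{2h}$ per grafted vertex versus $g = \sqrt{2h}\,\mathbf{1}$ per liana) indeed checks out, so nothing further is needed.
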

Using Theorem \ref{thm:Laurent}, we can define $\E : \AF_g \to \EF$ with $\E \circ F_f = F_f \circ \E$. For example,
\[ \mathbb{E} \big[ \forest{b[x,x,b[x,x]]} \big] = \forest{b[1,1,b[2,2]]} + 2 \forest{b[1,2,b[1,2]]}. \]

Let $\Phi : \{\bullet\} \sqcup \mathbb{N} \to \{\bullet,\times\}$ be a map defined as $\Phi (\bullet) = \bullet$ and $\Phi(k) = \times$ for $k \in \mathbb{N}$ that induces a map on decorations $\Phi(\alpha_e) = \Phi \circ \alpha_e$ and on exotic forests $\Phi(\pi, \alpha_e) = (\pi, \Phi \circ \alpha_e)$.
For example, $\Phi(\forest{b[1,1]}) = \forest{b[x,x]}$, $\Phi(\forest{b[1,2],b[2],1}) = \forest{b[x,x],b[x],x}$.

\begin{corollary}\label{corr:ES_expS}
    The expectation of an S-series over grafted forests $S(a)$ is an exotic S-series $ES(a \circ \Phi)$, that is,
    \[ \mathbb{E} \left[ S(a) \right] = ES(a \circ \Phi). \]
\end{corollary}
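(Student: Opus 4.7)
The plan is to start from the definition of the grafted S-series, swap expectation inside the sum, and apply Theorem \ref{thm:Laurent} term by term. This gives
\[ \mathbb{E}[S(a)] = \sum_{(\pi,\alpha_g) \in AF_g} \frac{a(\pi,\alpha_g)}{\sigma(\pi,\alpha_g)} \sum_{\alpha_e \,:\, \Phi\circ\alpha_e = \alpha_g} F_f(\pi,\alpha_e), \]
where the outer sum is indexed by isomorphism classes of grafted forests, while the inner sum runs over all exotic decorations $\alpha_e$ (i.e.\ pairings of the grafted vertices, modulo the liana-relabeling equivalence in \eqref{eq:exotic_def}) that project onto the fixed representative $\alpha_g$ via $\Phi$. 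Everything to do is to rewrite the double sum as a single sum indexed by isomorphism classes of exotic forests.

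The key is to group the inner pairings according to the isomorphism class of the resulting exotic forest. For a fixed grafted forest representative $(\pi,\alpha_g)$ and a fixed exotic class $(\pi,\alpha_e)$ with $\Phi(\pi,\alpha_e) = (\pi,\alpha_g)$, Proposition \ref{prop:p_generalized} (applied with $\alpha = \alpha_e$ and $\hat\alpha = \alpha_g$, noting that the exotic decoration is finer than the grafted decoration via $\Phi$) gives the number of pairings isomorphic to $(\pi,\alpha_e)$ as
\[ p(\pi,\alpha_e,\alpha_g) = \frac{\sigma(\pi,\alpha_g)}{\sigma(\pi,\alpha_e)}. \]
Substituting this into the inner sum and using the fact that $F_f$ depends only on the isomorphism class yields
\[ \mathbb{E}[S(a)] = \sum_{(\pi,\alpha_g) \in AF_g} \sum_{\substack{(\pi,\alpha_e) \in EF \\ \Phi(\pi,\alpha_e) = (\pi,\alpha_g)}} \frac{a(\pi,\alpha_g)}{\sigma(\pi,\alpha_e)} F_f(\pi,\alpha_e). \]

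Finally, since every exotic forest has a unique image in $AF_g$ under $\Phi$, the iterated sum collapses into a single sum over $EF$, and $a(\pi,\alpha_g) = a(\Phi(\pi,\alpha_e)) = (a\circ\Phi)(\pi,\alpha_e)$. This yields exactly the definition of $ES(a\circ\Phi)$ and proves the corollary. The only subtle point, which is the main obstacle to watch for, is the bookkeeping of what objects the two sums range over: Theorem \ref{thm:Laurent} sums over specific pairings on a fixed grafted representative, while the target expression sums over isomorphism classes of exotic forests, and it is precisely the identity of Proposition \ref{prop:p_generalized} that reconciles the two counts through the ratio of symmetry coefficients.
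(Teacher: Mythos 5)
Your proof is correct and follows essentially the same route as the paper: both rely on Theorem \ref{thm:Laurent} to expand the expectation over pairings and on Proposition \ref{prop:p_generalized} to identify the multiplicity $p(\pi,\alpha_e,\alpha_g)=\sigma(\pi,\alpha_g)/\sigma(\pi,\alpha_e)$, which cancels against the grafted symmetry factor and collapses the double sum into a single sum over exotic forests. The paper merely phrases this same computation more abstractly, as the statement that $\E$ commutes with both $\delta_\sigma$ and $F_f$.
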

\begin{proof}
    Recall that $p(\pi, \alpha_e, \alpha_g)$ (Section \ref{sec:rel_two_decorations}) is the number of ways to pair grafted vertices of $(\pi, \alpha_g)$ to obtain an exotic forest isomorphic to $(\pi, \alpha_e)$. Therefore, using Proposition \ref{prop:p_generalized}, we have,
    \[ \E[(\pi, \alpha_g)] = \sum_{\alpha_e \in \Phi^{-1}(\alpha_g)} p(\pi, \alpha_e, \alpha_g) (\pi, \alpha_e) = \sum_{\alpha_e \in \Phi^{-1}(\alpha_g)} \frac{\sigma(\pi, \alpha_g)}{\sigma(\pi, \alpha_e)} (\pi, \alpha_e), \]
    where the sum is over all $\alpha_e$ such that $\alpha_e^{-1} (\bullet) = \alpha_e^{-1} (\bullet)$. We use this identity to see that $\E$ commutes with $\delta_\sigma$. Since $\E$ commutes with both $F_f$ and $\delta_\sigma$, the statement is proved.
\end{proof}

\begin{remark}
We note that, in general, the expectation of a composition of S-series is not equal to the composition of the corresponding exotic S-series, i.e.
\[ \E \left[ S(a)[S(b)] \right] \neq ES(a) [ES(b)]. \]
    However, the equality holds if we use a splitting method that splits the noise denoted by the grafted vertices $\times$ into noises $\times_1$ and $\times_2$ that are independent. This assures that $\E \left[ S(a)[S(b)] \right]$ does not contain pairings of grafted vertices of colors $\times_1$ and $\times_2$, and makes $S(a)$ and $S(b)$ independent random variables. 
\end{remark}

\section{Order conditions for invariant measure sampling of ergodic SDEs}\label{sec:Coef_mult}

In this section, we generate order conditions building on the results and ideas from \cite{Laurent_2019}. We define a theoretical algorithm that generates systematically the order conditions for the invariant measure sampling of ergodic SDEs and prove an algebraic property of the generated order conditions. This allows us to reduce the number of order conditions, for example, it allows us to reduce the number of order 3 conditions from $13$ to $9$. 

We consider a truncated exotic S-series and apply transformations to the corresponding exotic forests. The result of the algorithm is a truncated exotic S-series $ES(\omega)$. We prove that $\omega$ is a character of $(\EF, \cdot)$, therefore, the order $p$ conditions with respect to the invariant measure are obtained by requiring $\omega(\tau) = 0$ for all $\tau \in ET$ with $|\tau| < p$.

\subsection{Numerical analysis framework} \label{sec:num_framework}

We recall that we focus on the overdamped Langevin equation (\ref{eq:problem}), 
\[ dX(t) = f(X(t))dt + \sqrt{2} dW(t), \quad X(t) \in \mathbb{R}^d, \quad f = - \nabla V, \]
where $V : \mathbb{R}^d \to \mathbb{R}$ is a smooth and globally Lipschitz potential and there exist $C_1 > 0$ and $C_2$ such that for all $x \in \mathbb{R}^d$, $V(x) \geq C_1 x^Tx - C_2$.
Such potential guarantees the problem to be ergodic~\cite{Mattingly_2002} with the density of the unique invariant measure being $\rho_\infty = Z \exp(-V)$ where $Z$ is such that $\int_{\mathbb{R}^d} {\rho_\infty (x)dx} = 1$.

\begin{assumption}
	The integrator $X_1 = \Psi_h(X_0, f, \xi)$ has bounded moments of any order along time, i.e., for all integer $k \geq 0$, 
$$ \sup_{n \geq 0} {\mathbb{E} [ | X_n |^{2k} ] } < \infty \quad \forall k \geq 0 $$
\end{assumption}

\begin{assumption}\label{assumption:Taylor} 
	The integrator $X_1 = \Psi_h(X_0, f, \xi)$ has a weak Taylor expansion of the form
$$ \mathbb{E} [ \phi(X_1) | X_0 = x ] = \phi(x) + h \mathcal{A}_1 \phi(x) + h^2 \mathcal{A}_2 \phi(x) + \cdots $$
    for all $\phi \in C^\infty_P (\mathbb{R}^d, \mathbb{R})$, where $\mathcal{A}_i, \ i = 1,2,\dots$, are linear differential operators. For more details see \cite{Talay_Tubaro}. We assume that $\mathcal{A}_1 = \mathcal{L}$ where $\mathcal{L}$ is the generator discussed in Section $\ref{sec:num_intro}$, that is, the integrator has at least weak order $1$.
\end{assumption}

\begin{theorem}\label{main_order_cond_thm} \cite{Abdulle_Vilmart_Zygalakis}
	Take an ergodic integrator $X_1 = \Psi_h (X_0, f, \xi)$.
        Assume the Assumptions 1, 2 to be true. If, for all $\phi \in C_P^\infty(\mathbb{R}^d, \mathbb{R})$, we have
    $$ \int_{\mathbb{R}^d} \mathcal{A}_j \phi (x) \rho_\infty(x) dx = 0, \quad j = 2, \dots, p, $$
	then, the integrator has order $p$ with respect to the invariant measure.
\end{theorem}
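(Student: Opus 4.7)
The plan is a Talay--Tubaro-type argument built on the Poisson equation associated with the generator $\mathcal{L}$, closed by induction on $p$. Given $\phi \in C_P^\infty(\mathbb{R}^d,\mathbb{R})$, set $\phi_\infty := \int \phi\, d\mu$. Under the hypotheses on the potential $V$ (smoothness, global Lipschitz, and the quadratic confinement $V(x) \geq C_1 x^T x - C_2$), standard ergodic-semigroup theory provides a solution $\psi \in C_P^\infty(\mathbb{R}^d,\mathbb{R})$ of the Poisson equation
\begin{equation*}
    \mathcal{L}\psi = \phi - \phi_\infty,
\end{equation*}
whose derivatives have polynomial growth. In particular, $\psi$ and every $\mathcal{A}_j \psi$ are admissible test functions with expectations uniformly bounded along $(X_n)_{n \geq 0}$ by Assumption 1.

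Applying the (truncated at order $p$) weak Taylor expansion from Assumption 2 to $\psi$, and using $\mathcal{A}_1 = \mathcal{L}$ together with the Poisson equation, I would write
\begin{equation*}
    \mathbb{E}[\psi(X_{n+1})\mid X_n = x] - \psi(x) = h\bigl(\phi(x) - \phi_\infty\bigr) + \sum_{j=2}^{p} h^j \mathcal{A}_j \psi(x) + h^{p+1} R_p(x,h),
\end{equation*}
with $R_p$ having polynomial growth uniformly in small $h$. Telescoping this identity for $n = 0, \dots, N-1$, dividing by $Nh$, and letting $N \to \infty$ makes the left-hand side vanish (since $\mathbb{E}[\psi(X_N)]$ stays bounded by Assumption 1), while the Cesàro means on the right converge to integrals against $\mu^h$ by ergodicity of the scheme and the remainder contributes an $O(h^p)$ term. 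This yields the key identity
\begin{equation*}
    \int \phi \, d\mu^h - \int \phi \, d\mu = -\sum_{j=2}^{p} h^{j-1} \int \mathcal{A}_j \psi \, d\mu^h + O(h^p).
\end{equation*}
Since the hypothesis is quantified over every $\phi \in C_P^\infty$, it applies in particular to $\psi$ and gives $\int \mathcal{A}_j \psi \, d\mu = 0$ for $j = 2,\dots,p$; subtracting yields
\begin{equation*}
    \int \phi \, d\mu^h - \int \phi \, d\mu = -\sum_{j=2}^{p} h^{j-1} \left[\int \mathcal{A}_j \psi \, d\mu^h - \int \mathcal{A}_j \psi \, d\mu\right] + O(h^p).
\end{equation*}

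The proof finishes by induction on $p$. The base case $p = 1$ is the identity above with the entire sum absorbed into the error, since $\int \mathcal{A}_2 \psi \, d\mu^h$ is bounded under Assumption 1. For the inductive step, the key observation is that the hypothesis is a property of the operators $\mathcal{A}_j$ themselves and not of any particular test function, so it automatically holds with $\mathcal{A}_j \psi$ in place of $\phi$ and at every order $\leq p$. Applying the inductive hypothesis at order $p-1$ to each $\mathcal{A}_j \psi \in C_P^\infty$ bounds every bracket by $O(h^{p-1})$, and multiplication by $h^{j-1}$ with $j \geq 2$ gives $O(h^p)$ per term. The main obstacle is the very first step: constructing the Poisson solution $\psi$ in $C_P^\infty$ with derivatives of polynomial growth. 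This demands the exponential decay of the centered semigroup $e^{t\mathcal{L}}$, which rests on the quadratic confinement of $V$ and careful derivative estimates on the transition density; once this is granted, the telescoping and induction that follow are comparatively soft.
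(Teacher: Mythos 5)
The paper does not actually prove this statement: Theorem \ref{main_order_cond_thm} is quoted verbatim from the cited reference \cite{Abdulle_Vilmart_Zygalakis}, so there is no in-paper argument to match yours against. Judged on its own, your proof is sound and follows a genuinely different (and well-established) route from the one in that reference. You solve the Poisson equation $\mathcal{L}\psi = \phi - \int\phi\,d\mu$, feed $\psi$ into the numerical weak Taylor expansion, telescope, divide by $Nh$, and invoke ergodicity of the scheme plus bounded moments to obtain the key identity; the hypothesis $\int \mathcal{A}_j \chi\, \rho_\infty\,dx = 0$ for \emph{all} $\chi \in C_P^\infty$ is then applied to $\psi$ and, through the induction on $p$, to the functions $\mathcal{A}_j\psi$, with each bracket controlled at order $p-1$ and the prefactor $h^{j-1}$, $j \geq 2$, lifting it to $O(h^p)$. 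This is the Poisson-equation strategy in the spirit of Mattingly--Stuart--Tretyakov, whereas Abdulle--Vilmart--Zygalakis instead telescope the long-time weak error against the backward Kolmogorov solution $u(x,t) = \mathbb{E}[\phi(X(t))\mid X(0)=x]$ and use exponential decay of $u(\cdot,t) - \int\phi\,d\mu$ and its derivatives. The trade-off: your route is algebraically cleaner (one auxiliary function, a short induction), but it concentrates the analytic work in exactly the step you flag — existence of $\psi \in C_P^\infty$ with polynomially growing derivatives — which is equivalent in substance to the semigroup decay estimates the cited proof uses; the semigroup route, in exchange, yields an explicit expansion of the invariant-measure error whose leading coefficients are the quantities the rest of this paper manipulates. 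Two small points to make airtight if you write this up in full: upgrade the almost-sure ergodic averages in the paper's definition to convergence of $\frac1N\sum_n\mathbb{E}[g(X_n)]$ for polynomially growing $g$ (uniform integrability from Assumption 1), and state that the remainder bound $h^{p+1}R_p$ with polynomial growth uniform in small $h$ is part of the ``usual assumptions'' on the integrator, since Assumption \ref{assumption:Taylor} as written is only a formal expansion.
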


\subsection{Transformation of exotic forests}

We consider numerical integrators that can be expanded using B-series over grafted trees, for example, sRK methods. We recall that for such integrators the differential operators $\mathcal{A}_j$ from Theorem \ref{main_order_cond_thm} have the form
\[ \mathcal{A}_j = \sum_{\pi \in EF_j} \frac{a(\pi)}{\sigma(\pi)} F_f (\pi)[\cdot], \]
where $EF_j$ is the subset of exotic forests of size $j$. Thus, Theorem \ref{main_order_cond_thm} states that a numerical method $y_0 + B(a)(y_0) $ is order $p$ with respect to the invariant measure, if
\[ \int_{\mathbb{R}^d} ES_{<p} (a)[\phi] \rho_\infty dx = (I \circ \delta_{\sigma, <p})(a) = 0, \]
where $I(\pi) = \int_{\mathbb{R}^d} F_f (\pi)[\phi] \rho_\infty dx$ and $\delta_{\sigma, <p} (a)$ is the sum over all exotic forests up to size $p$ with coefficients given by $a : EF \to \mathbb{R}$ normalized by $\sigma$. We obtain order conditions with respect to the invariant measure by modifying the differential operators that make up $ES_{<p}(a)$ in a way that does not change the value of the integral. This translates into two transformations applied to the corresponding exotic forests:
\begin{enumerate}
    \item Edge-liana inversion (ELI), which moves the liana down the tree along an edge,
        \[ \forest{b_A_180[b_B_180[1]],b_C[1]} \quad \xrightarrow{ELI} \quad \forest{b_A_180[1],b_C[b_B[1]]}, \]
        where we note that ELI uses the fact that the exotic forests here are used to denote differential operators, which means that it assumes there is an "invisible" edge starting at the roots,
    \item Integration by parts (IBP), which takes a grafted root, connects it to all other vertices with coefficient $-1$, and adds a term with coefficient $-2$ in which the grafted root is removed and the paired grafted vertex is colored black, for example,
        \[ \forest{1_A_180,b[b,b[1]]} \quad \xrightarrow{IBP} \quad - \forest{b[1_A_180,b,b[1]]} - \forest{b[b[1_A_180],b[1]]} - \forest{b[b,b[1_A_180,1]]} - 2 \forest{b[b,b[b]]}. \]
\end{enumerate}
 More details on ELI and IBP can be found in \cite{Laurent_2019} in Section 4.2 and Proposition 4.7.
Proposition \ref{prop:sim_integral} allows us to use ELI and IBP to obtain order conditions. Proposition is proven for ELI using the fact that the vector field $f$ is the gradient of a potential, that is, $f = - \nabla V$, and for IBP using the integration by parts process on the integral.

\begin{proposition} \label{prop:sim_integral} \cite{Laurent_2019}
    Let $\pi_1, \hat{\pi}_1, \pi_2, \hat{\pi}_2 \in \EF$ such that $\pi_1 \xrightarrow{ELI} \hat{\pi}_1$ and $\pi_2 \xrightarrow{IBP} \hat{\pi}_2$, then 
    \[ I(\pi_1) = I(\hat{\pi}_1) \quad \text{and} \quad I(\pi_2) = I(\hat{\pi}_2), \]
where $\phi \in C_P^\infty$ from the definition of $I$ is a test function.
\end{proposition}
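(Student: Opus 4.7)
The plan is to unpack $I(\pi) = \int_{\mathbb{R}^d} F_f(\pi)[\phi]\, \rho_\infty\, dx$ using Definition~\ref{def:F_f_classic}, so that the integrand becomes an explicit sum over index decorations $\alpha : V(\pi) \to [d]$ of products of partial derivatives of $f^{\alpha(v)}$ (one factor per black vertex, together with $\sqrt{2h}$-prefactors from the liana endpoints), multiplied by $\phi$ differentiated in the directions of the root decorations, and integrated against $\rho_\infty$. With this explicit form in hand, both ELI and IBP become local analytic identities on the integrand.

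For the ELI identity, I locate the edge $A \to B$ and the liana endpoint to be moved. The edge contributes the factor $\partial_{\alpha(B)} f^{\alpha(A)}$ to the integrand of $I(\pi_1)$, while the liana endpoint sitting above $B$ contributes, via its paired index $i$, a derivative applied to $f^{\alpha(B)}$, producing a product of the form $(\partial_{\alpha(B)} f^{\alpha(A)})(\partial_i f^{\alpha(B)})$. Since $f = -\nabla V$, the Hessian symmetry gives $\partial_j f^k = -\partial_j\partial_k V = \partial_k f^j$; applying this to the first factor exchanges the roles of $\alpha(A)$ and $\alpha(B)$. Relabeling the dummy root indices of the two components of the forest then realizes the combinatorial move that detaches the subtree rooted at $B$ from $A$, reattaches it above $C$ in the other component, and leaves the liana endpoint directly under $A$ --- precisely the ELI transformation --- with no change to the integrand, hence no change to $I$.

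For IBP, the grafted root of $\pi_2$ contributes, after the liana pairing, a stand-alone derivative $\partial_A$ multiplying the rest of the integrand. I write $I(\pi_2)$ in the form $\int (\partial_A G_A)(x)\, \partial_X \phi\, \rho_\infty\, dx$ (up to constants) where $G_A$ collects the factors coming from the remaining component of $\pi_2$, and integrate by parts in $x_A$ (boundary terms vanishing by the polynomial growth of derivatives of $\phi$ and the exponential decay of $\rho_\infty$) to obtain
\[
-\int G_A\, \partial_X\phi\, \partial_A\rho_\infty\, dx \;-\; \int (\partial_A G_A)\, \partial_X\phi\, \rho_\infty\, dx.
\]
Because $f = -\nabla V$, one has $\partial_A \rho_\infty = -(\partial_A V)\rho_\infty = f^A \rho_\infty$, so the first integral produces a term in which the paired liana endpoint is replaced by a black vertex carrying the factor $f^A$; this is the $-2\,\forest{b[b,b[b]]}$ term, where the coefficient $-2$ arises from absorbing the $(\sqrt{2h})^2 = 2h$ liana prefactor into the $h$ of the newly-created black vertex. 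In the second integral the Leibniz rule distributes $\partial_A$ over the factors of $G_A$: each factor corresponds to one black vertex of the remaining component, and applying $\partial_A$ to it is exactly the operation of grafting the removed endpoint $1_A$ as a new liana child of that vertex, producing the three $-1$ terms of the IBP formula.

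The main obstacle is the combinatorial bookkeeping: matching each Leibniz-expanded summand to the intended exotic forest and verifying that all $h$ and $\sqrt{2h}$ prefactors assemble correctly in generality, not just on the displayed example. Both transformations are, however, strictly local --- ELI acts at a single edge and IBP at a single grafted root --- so the calculation above goes through verbatim for arbitrary surrounding forest shape, yielding the general identity.
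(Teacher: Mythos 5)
Your proposal is correct and follows essentially the same route as the paper, which itself only sketches the argument (deferring the full proof to the cited reference): ELI is justified by the Hessian symmetry $\partial_i f^j = \partial_j f^i$ coming from $f=-\nabla V$, and IBP by integration by parts together with the identity $\partial_i\rho_\infty = f^i\rho_\infty$, with the $-2$ coefficient arising exactly as you describe from the $(\sqrt{2h})^2$ liana prefactor versus the single $h$ of the new black vertex. The combinatorial bookkeeping you flag as the remaining obstacle is likewise not carried out in the paper, which verifies only the two displayed examples.
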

For example, let $\forest{b[1],b[1]} \xrightarrow{ELI} \forest{1,b[b[1]]}$, then, $I(\forest{b[1],b[1]}) = I(\forest{1,b[b[1]]})$ due to the following computation where the property $\partial_i f^j = \partial_j f^i$ is used,
\begin{align*}
    I(\forest{b[1],b[1]}) &= 2h \sum_{i,j,k=1}^d \int_{\mathbb{R}^d} (\partial_i f^j) (\partial_i f^k) (\partial_{j,k} \phi) \rho_\infty dx \\
    &= 2h \sum_{i,j,k=1}^d \int_{\mathbb{R}^d} (\partial_j f^i) (\partial_i f^k) (\partial_{j,k} \phi) \rho_\infty dx = I(\forest{1,b[b[1]]}).
\end{align*}
Analogously, let $\forest{1,b[b[1]]} \xrightarrow{IBP} - \forest{b[b[1,1]]} - \forest{b[1,b[1]]} - 2 \forest{b[b[b]]}$, then, we perform the following computation,
\begin{align*}
    I(\forest{1,b[b[1]]}) &= 2h \sum_{i,j,k=1}^d \int_{\mathbb{R}^d} (\partial_i f^j) (\partial_j f^k) (\partial_{i,k} \phi) \rho_\infty dx \\
    &= 2h \sum_{i,j,k=1}^d \big[ - \int_{\mathbb{R}^d} (\partial_{i,i} f^j) (\partial_j f^k) (\partial_k \phi) \rho_\infty dx 
    - \int_{\mathbb{R}^d} (\partial_i f^j) (\partial_{i,j} f^k) (\partial_k \phi) \rho_\infty dx \\
    &\quad - \int_{\mathbb{R}^d} (\partial_i f^j) (\partial_j f^k) (\partial_k \phi) (\partial_i \rho_\infty) dx \big]
    = I(- \forest{b[b[1,1]]} - \forest{b[1,b[1]]} - 2 \forest{b[b[b]]}).
\end{align*}
where we used integration by parts of the integral and the identity $\partial_i \rho_\infty = f^i \rho_\infty$.
We define the term \textit{connecting liana} and build an algorithm by composing ELI and IBP such that the exotic forests obtained by the algorithm have no connecting lianas. 
\begin{definition}
    Let a connecting liana be a liana $\alpha_e^{-1} (k) = \{v_1, v_2\}$ in $\pi$ for some $k \in \mathbb{N}$ such that $v_1$ and $v_2$ are in different connected components of $\pi$.
\end{definition}
\begin{example}\label{ex:alg_action}
Connecting lianas are labeled in exotic forests below,
\[ \forest{b[1_A_90],b[1_A_90]}, \quad \forest{b[1,1,b[2_B_90]],b[2_B_90]}, \quad \forest{1_C_90,b[b[b,1_C_90]]}, \]
and the compositions of ELI and IBP that get rid of the connecting lianas are
\[ \forest{b[1],b[1]} \quad \xrightarrow{ELI} \quad \forest{1,b[b[1]]} \quad \xrightarrow{IBP} \quad - \forest{b[b[1,1]]} - \forest{b[1,b[1]]} - 2 \forest{b[b[b]]}, \]
\[ \forest{b[1,1,b[2]],b[2]} \quad \xrightarrow{ELI} \quad \forest{b[1,1,2],b[b[2]]} \quad \xrightarrow{ELI} \quad \forest{2,b[b[b[1,1,2]]]} \quad \xrightarrow{IBP} \quad - \forest{b[2,b[b[1,1,2]]]} - \forest{b[b[2,b[1,1,2]]]} - \forest{b[b[b[2,1,1,2]]]} - 2 \forest{b[b[b[1,1,b]]]}, \]
\[ \forest{1,b[b[b,1]]} \quad \xrightarrow{IBP} \quad - \forest{b[1,b[b,1]]} - \forest{b[b[1,b,1]]} - \forest{b[b[b[1],1]]} - 2 \forest{b[b[b,b]]}. \]
\end{example}

The compositions of ELI and IBP listed in Example \ref{ex:alg_action} are called transformation chains and denoted by $\pi \to \hat{\pi}$ where $\pi \in EF$ and $\hat{\pi} \in \widehat{\EF}$, where $\widehat{\EF}$ is the vector space of exotic forests without connecting lianas.

\subsection{Labeled Transformation Chains}\label{sec:labeled_transformation_chains}

To simplify the analysis of the algorithm that we introduce, let us consider the space of labeled exotic forests denoted by $\EF_L$. We use the labeling to split the transformation chains into labeled transformation chains (LTCs) that have labeled exotic forests as terms. This means that the IBP transformation applied to $\pi$ is split into $IBP_v$ transformations for $v \in V(\pi)$ and $IBP_\bullet$ for the term where the grafted root is removed and the remaining grafted vertex becomes black. For example, the transformation chain
\[ \forest{b[1],b[1]} \quad \xrightarrow{ELI} \quad \forest{1,b[b[1]]} \quad \xrightarrow{IBP} \quad - \forest{b[b[1,1]]} - \forest{b[1,b[1]]} - 2 \forest{b[b[b]]} \]
is split into the following labeled transformation chains:
\[ \forest{b[1],b[1]} \quad \xrightarrow{ELI} \quad \forest{1,b[b[1]]} \quad \xrightarrow{IBP_1} \quad \forest{b[b[1,1]]}, \]
\[ \forest{b[1],b[1]} \quad \xrightarrow{ELI} \quad \forest{1,b[b[1]]} \quad \xrightarrow{IBP_2} \quad \forest{b[1,b[1]]}, \]
\[ \forest{b[1],b[1]} \quad \xrightarrow{ELI} \quad \forest{1,b[b[1]]} \quad \xrightarrow{IBP_\bullet} \quad \forest{b[b[b]]}, \]
where we exclude the coefficients from the LTC and handle them separately. We note that ELI is not affected.

Let us denote by $\Psi : \EF \to \EF_L$ any injection of $\EF$ into $\EF_L$ which labels the vertices of exotic forests according to some rules. Let $\Phi : \EF_L \to \EF$ be the linear map that forgets the labeling. We note that $\Phi \circ \Psi = \text{id}$ and $\Psi \circ \Phi$ is a relabeling. Let $A : \EF \to \widehat{\EF}$ be a linear map defined as
\[ A := \Phi \circ A_L \circ \Psi, \quad \text{where } A_L (\pi) := \sum_{\pi \to \hat{\pi}} C(\pi \to \hat{\pi}) \hat{\pi}, \]
where the sum is taken over all labeled transformation chains (LTCs) starting at $\pi$ which are generated recursively by Algorithm 1 and the coefficient $C(\pi \to \hat{\pi})$ is defined as
\[C(\pi \to \hat{\pi}) := (-1)^{|\pi \to \hat{\pi}|_{IBP_v}} (-2)^{|\pi \to \hat{\pi}|_{IBP_\bullet}},\]
where $|\pi \to \hat{\pi}|_{IBP_v}$ is the number of $IBP_v$ transformations for $v \in V(\tilde{\pi})$ for any intermediate $\tilde{\pi}$ and $|\pi \to \hat{\pi}|_{IBP_\bullet}$ is the number of $IBP_\bullet$ transformations.

Let us assume a total order on the vertices of any labeled exotic forest. We require the total order to respect the concatenation product, that is,
\[ v_1 \leq v_2 \text{ in } \pi_1 \cdot \pi_2 \quad \text{if} \quad v_1 \leq v_2 \text{ in } \pi_1, \quad \text{for } v_1, v_2 \in V(\pi_1).  \]
Such order can be obtained by extending the results from \cite{TygliyanFilippov}. Let a minimal connecting liana be the connecting liana $\{ v_1, v_2 \}$ such that $v_1$ has the shortest path to the root. If there are multiple such lianas, choose the liana with smallest $v_1$ according to the total order of vertices. If there are multiple lianas with equal $v_1$, choose the liana with the smallest $v_2$.

\vspace{.5cm}
\textbf{Algorithm 1:} Generate the set of all LTCs that start with $\pi_1 \in EF_L$\\
\rule{\textwidth}{.1pt}
\textbf{Input:} $\pi_1 \in EF_L$ \\
\textbf{Output:} The set of LTCs $\{ \pi_1 \to \pi_n \}$ with $\pi_n \in \widehat{EF}_L$.
\begin{enumerate}
    \item[\texttt{Step 1: }] If $\pi_1$ has no connecting lianas, then return the singleton set $\{ \pi_1 \to \pi_1 \}$. Else, let $l = \{v_1, v_2\}$ be the minimal connecting liana of $\pi_1$.
    \item[\texttt{Step 2: }] If the grafted vertex $v_1$ is a root, then let $\{\pi_{2}^{(i)}\}_{i=1}^N$ with $N = |V(\pi_1)| + 1$ be the set of forests obtained by applying $IBP_v$ and $IBP_\bullet$ to $\pi_1$ with respect to $l$. 
    \item[\texttt{Step 3: }] If $v_1$ is not a root, then let $\{ \pi_{2}\}$ be the singleton set containing the forest obtained by applying ELI moving $l$ towards the root. 
    \item[\texttt{Step 4: }] For each $\pi_{2} \in \{\pi_{2}^{(i)}\}_{i=1}^N$, apply \textbf{Algorithm 1} to $\pi_2$. Merge all the resulting sets $\{ \pi_2^{(i)} \to \pi_n \}$ of LTCs and prepend $\pi_1$ to each LTC. Return the resulting set $\{\pi_1 \to \pi_n\} = \{ \pi_1 \to \pi_2^{(i)} \to \pi_n \}$.
\end{enumerate}
\rule{\textwidth}{.1pt}
\vspace{.5cm}

\begin{proposition}
	Algorithm 1 ends in a finite number of steps.
\end{proposition}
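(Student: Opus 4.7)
The plan is to exhibit a well-founded measure on labeled exotic forests that strictly decreases with each recursive invocation of Algorithm 1. Combined with the observation that each call branches into at most $|V(\pi)|+1$ subproblems (Step 2) or one subproblem (Step 3), this will force the recursion tree to be finite.

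I would split the argument according to the two transformations. IBP (Step 2) strictly decreases the number of connecting lianas $N(\pi)$: when $v_1$ is a root, the connected component $\{v_1\}$ is merged into the component of $v_2$ in every term $\pi_2^{(i)}$, so $l$ is either removed (the $IBP_\bullet$ term) or absorbed inside a single connected component (the $IBP_v$ terms), and any other liana that was joining these two components likewise ceases to be connecting. ELI (Step 3), by contrast, preserves the partition into connected components and therefore preserves $N$. Hence ELI must strictly decrease a secondary quantity.

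For that secondary measure I would take $S(\pi) := \mathrm{depth}(v_1)$, the depth of $v_1$ in the minimal connecting liana of $\pi$, with ties broken via the total order on vertices from Section \ref{sec:labeled_transformation_chains}. Since the ELI of Step 3 is directed ``towards the root'', each application brings $v_1$ strictly closer to being a root in the sense captured by $S$; once $S(\pi)=0$, the vertex $v_1$ is a root and Step 2 is triggered instead. The lexicographic measure $\mu(\pi) := (N(\pi), S(\pi)) \in \mathbb{N}^2$ then strictly decreases at each recursive call, and since the lex order on $\mathbb{N}^2$ is well-founded, the recursion depth along any branch is finite. Coupled with the finite branching of Step 2, the full recursion tree is finite by König's lemma, so the algorithm terminates and outputs a finite set of LTCs.

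The main obstacle I would anticipate is verifying the strict decrease of the secondary measure $S$ under \emph{every} ELI. When $v_1$'s parent is itself a root, a single ELI may only redistribute edges between the two components while leaving $\mathrm{depth}(v_1)$ unchanged and merely swapping the roles of $v_1$ and $v_2$, as happens in the first step of Example \ref{ex:alg_action}. In such configurations one has to refine $S$ by incorporating the total order on vertices as a tie-breaker, or equivalently replace it by a potential counting the number of ancestor edges on the side of $v_1$ that still need to be processed before $v_1$ becomes a root. Checking that this refined potential is genuinely decreased by the deterministic ``towards the root'' ELI rule is the technical heart of the proof.
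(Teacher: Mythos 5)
Your overall strategy (a well-founded lexicographic measure plus finite branching and K\"onig's lemma) has the right shape, but your primary measure does not do what you claim: it is false that in every $IBP_v$ term the component $\{v_1\}$ is merged into the component of $v_2$. Step 2 attaches the grafted root $v_1$ to \emph{every} black vertex of the forest, including vertices lying in components that contain neither $v_1$ nor $v_2$. Concretely, take $\pi = \forest{1,b[1],b}$, whose unique liana is connecting and has the grafted root as $v_1$. The $IBP_v$ term in which $v_1$ is attached to the isolated black vertex is $\forest{b[1],b[1]}$: the liana still joins two distinct components, so $N(\pi)$ is unchanged, while the depth of the closer endpoint of the (still unique, hence minimal) connecting liana has risen from $0$ to $1$. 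Thus your lexicographic pair $(N,S)$ strictly \emph{increases} along this recursive call, and the induction breaks. The obstacle you anticipated (ELI when $v_1$'s parent is a root) is comparatively harmless, since the ``invisible'' root edge lets ELI promote the moved endpoint to a root in one step; the genuine gap is in the IBP case.

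The paper's proof uses a different primary measure that does work: the number of roots. Every $IBP_v$ and $IBP_\bullet$ term removes $v_1$ from the set of roots (it is either re-attached below another vertex or deleted) and creates no new root, so the root count strictly drops at each IBP; ELI leaves it unchanged (when an endpoint becomes a root via the invisible edge, its former parent simultaneously ceases to be one). As secondary measure one uses the distance to the root of the endpoint of the minimal connecting liana that ELI is moving: the paper observes that this liana remains minimal after ELI and that each ELI moves it strictly closer to a root, so only finitely many ELIs can occur between consecutive IBPs. Combined with your (correct) finite-branching observation, this yields termination. To repair your argument you would have to replace $N$ by the root count (or otherwise handle the $IBP_v$ terms that land in a third component); as written, the proof has a gap.
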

\begin{proof} 
The algorithm is guaranteed to end because every application of the IBP decreases the number of roots, which means that IBP can be applied only a finite number of times. The application of ELI does not change the number of roots. ELI moves a liana towards the root which can be done a finite number of times. Note that the minimal connecting liana will stay minimal after the application of ELI.
\end{proof}

Let $\langle \cdot, \cdot \rangle $ be the orthonormal inner product, that is, for $\pi_1, \pi_2 \in EF$, we have
\[ \langle \pi_1, \pi_2 \rangle := \begin{cases} 1 &, \text{if } \pi_1 = \pi_2, \\ 0 &, \text{otherwise.} \end{cases} \]
Let $\langle \cdot, \cdot \rangle_\sigma$ be the renormalized inner product, that is, $\langle \pi_1, \pi_2 \rangle_\sigma := \sigma(\pi_1) \langle \pi_1, \pi_2 \rangle$. We note that both inner products are equal on the space of labeled exotic forests $\EF_L$.
Due to Algorithm 1, the maps $A_L$ and $A$ are well-defined and we are ready to obtain the order conditions with respect to the invariant measure. The order conditions are denoted by $\omega(\pi) = 0$ with $\pi \in \widehat{EF}$ where
\begin{equation}\label{eq:omega}
    \omega (\pi) := \langle (A \circ \delta_\sigma) (a), \pi \rangle_\sigma.
\end{equation}
Due to Theorem \ref{main_order_cond_thm} and Proposition \ref{prop:sim_integral}, the conditions $\omega (\pi) = 0$ for all $\pi \in \widehat{EF}, |\pi| < p$, imply the order $p$ with respect to the invariant measure, since
\[ \int_{\mathbb{R}^d} ES_{<p}(a)[\phi]\rho_\infty dx = (I \circ \delta_{\sigma, <p}) (a) = (I \circ A \circ \delta_{\sigma, <p}) (a) = 0. \]
The values of $\omega$ for all exotic forests up to size $3$ are listed in Table \ref{table:append_ord_cond} of the Appendix.

\subsection{Multiplicative property of order conditions}

Let $\Delta_\sigma : \EF \to \EF \otimes \EF$ denote the dual of the concatenation product with respect to the inner product $\langle \cdot, \cdot \rangle_\sigma$. The explicit formula for $\Delta_\sigma$ is the following
\[ \Delta_\sigma (\frac{\pi}{\sigma(\pi)}) = \sum_{\pi_1 \cdot \pi_2 = \pi} \frac{\pi_1}{\sigma(\pi_1)} \otimes \frac{\pi_2}{\sigma(\pi_2)}. \]
We can see that this formula is true, since,
\[ \langle \pi_1 \cdot \pi_2, \pi \rangle_\sigma = \langle \pi_1 \otimes \pi_2, \Delta_\sigma (\pi) \rangle_\sigma = \sigma(\pi), \quad \text{if } \pi_1 \cdot \pi_2 = \pi. \]
Let us also consider the dual of the concatenation product on the space $\EF_L$ of labeled exotic forests, $\Delta : \EF_L \to \EF_L \otimes \EF_L$. We prove Lemma \ref{lemma:LTC_split_sets} as an intermediate result.

\begin{lemma}\label{lemma:LTC_split_sets}
Let $\pi, \hat{\pi} \in EF_L$ be labeled exotic forests. We define the sets $S_1$ and $S_2$ as
\begin{align*}
S_1 &:= \{ (\pi \to \hat{\pi}, (\hat{\pi}_1, \hat{\pi}_2)) \; : \; \hat{\pi}_1 \cdot \hat{\pi}_2 = \hat{\pi} \}, \\
S_2 &:= \{ ((\pi_1, \pi_2), \pi_1 \to \hat{\pi}_1, \pi_2 \to \hat{\pi}_2) \; : \; \pi_1 \cdot \pi_2 = \pi \},
\end{align*}
then, $S_1 \cong S_2$.
\end{lemma}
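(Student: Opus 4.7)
}
The plan is to exhibit an explicit bijection between $S_1$ and $S_2$ by observing that the LTCs produced by Algorithm~1 are compatible with the concatenation product in a very strong sense: the transformation chain of a product factors as an interleaving of the transformation chains of the factors.

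The starting observation is that since $\pi_1, \pi_2 \in EF_L$ already contain all their liana pairs internally, every liana of $\pi = \pi_1 \cdot \pi_2$ lies entirely in $V(\pi_1)$ or entirely in $V(\pi_2)$; no liana crosses the factorization. A single ELI or $IBP_v$/$IBP_\bullet$ step acts only on the liana being resolved and the vertices in its two (or one) connected components, so each step of an LTC $\pi \to \hat{\pi}$ modifies vertices on exactly one side of the factorization. Consequently, tracking vertices through the LTC (and sending the grafted root removed by $IBP_\bullet$ to the same side as its surviving partner) partitions $V(\pi)$ into two sets that assemble into $\pi_1$ and $\pi_2$ with $\pi_1 \cdot \pi_2 = \pi$, and the LTC decomposes as an interleaving of a subchain $\pi_1 \to \hat{\pi}_1$ and a subchain $\pi_2 \to \hat{\pi}_2$ where $\hat{\pi}_1 \cdot \hat{\pi}_2 = \hat{\pi}$.

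From this observation I would build the map $S_1 \to S_2$ as follows: given $(\pi \to \hat{\pi}, (\hat{\pi}_1, \hat{\pi}_2))$, assign each vertex $v \in V(\pi)$ to side $i \in \{1,2\}$ according to which of $\hat{\pi}_1, \hat{\pi}_2$ contains the image of $v$ under the LTC (with the convention above for vertices eliminated by $IBP_\bullet$); read off the induced $(\pi_1, \pi_2)$ and the two restricted subchains. For the reverse map $S_2 \to S_1$, given $((\pi_1, \pi_2), \pi_1 \to \hat{\pi}_1, \pi_2 \to \hat{\pi}_2)$, interleave the two subchains into an LTC on $\pi$ by executing, at each step, the unique step from whichever subchain has the minimal connecting liana of the current forest $\pi_1' \cdot \pi_2'$; the resulting chain ends at $\hat{\pi}_1 \cdot \hat{\pi}_2$ and comes equipped with the factorization $(\hat{\pi}_1, \hat{\pi}_2)$.

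The main step to verify, and the only nontrivial one, is that this interleaving reproduces precisely the LTCs enumerated by Algorithm~1 on $\pi$. This reduces to the claim that the algorithm's notion of minimal connecting liana is compatible with $\cdot$: every connecting liana of $\pi_1' \cdot \pi_2'$ lies entirely in one factor, depth to the root is measured within connected components (which are preserved by $\cdot$), and the total order on $V(\pi_1 \cdot \pi_2)$ is required by assumption to restrict to the total order on $V(\pi_i)$. Thus the minimal connecting liana of $\pi_1' \cdot \pi_2'$ equals the minimal connecting liana of the factor in which it sits, so the interleaving is uniquely determined and each of its restrictions is exactly the LTC produced by running Algorithm~1 on the corresponding factor. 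Finally, the two constructions are mutually inverse essentially by definition, giving $S_1 \cong S_2$.
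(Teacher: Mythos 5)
Your proposal is correct and takes essentially the same route as the paper's proof: split an LTC along the vertex labels inherited from the factorization $\hat{\pi} = \hat{\pi}_1 \cdot \hat{\pi}_2$, and conversely merge two LTCs into one by interleaving, using that the total order on vertices (and hence the minimal-connecting-liana selection of Algorithm 1) is compatible with the concatenation product. Your explicit check that the interleaved chain is exactly one of the chains generated by Algorithm 1, and your convention for vertices removed by $IBP_\bullet$, merely spell out points the paper's shorter argument leaves implicit.
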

\begin{proof}
Let us take a tuple $(\pi \to \hat{\pi}, (\hat{\pi}_1, \hat{\pi}_2)) \in S_1$. It contains an LTC $\pi \to \hat{\pi}$ and a splitting of $\hat{\pi}$ into $\hat{\pi}_1$ and $\hat{\pi}_2$. Since LTC keeps the labels of vertices when it acts on them, we can split $\pi$ into $\pi_1$ and $\pi_2$ by following the labeling of $\hat{\pi}_1$ and $\hat{\pi}_2$. This also gives us a splitting of the LTC $\pi \to \hat{\pi}$ into $\pi_1 \to \hat{\pi}_1$ and $\pi_2 \to \hat{\pi}_2$. That is, we get a tuple $((\pi_1, \pi_2), \pi_1 \to \hat{\pi}_1, \pi_2 \to \hat{\pi}_2)$ which is an element of $S_2$. 

Let us take a tuple $((\pi_1, \pi_2), \pi_1 \to \hat{\pi}_1, \pi_2 \to \hat{\pi}_2) \in S_2$ that contains a splitting of $\pi$ into $\pi_1$ and $\pi_2$, and two LTCs $\pi_1 \to \hat{\pi}_1$ and $\pi_2 \to \hat{\pi}_2$. We can combine $\pi_1 \to \hat{\pi}_1$ and $\pi_2 \to \hat{\pi}_2$ by concatenating all intermediate labeled exotic forests into one LTC $\pi \to \hat{\pi}$. This is possible since the total order of vertices respects the concatenation product and the two LTC have distinct labels because $\pi_1$ and $\pi_2$ are a splitting of one exotic forest. By combining the two LTCs, we also get an exotic forest $\hat{\pi}$ that has $\hat{\pi}_1$ and $\hat{\pi}_2$ as splitting.

This finishes the proof.
\end{proof}

We prove that the maps $\Phi : (\EF_L, \Delta) \to (\EF, \Delta_\sigma)$, $A_L : (\EF_L, \Delta) \to (\widehat{\EF}_L, \Delta)$, and $A : (\EF, \Delta_\sigma) \to (\widehat{\EF}, \Delta_\sigma)$ are coalgebra morphisms. We recall that the maps $A$ and $A_L$ are defined in Section \ref{sec:labeled_transformation_chains} and $\Phi$ is the map that forgets the labels of the exotic forests.

\begin{proposition}\label{prop:Alg_LD}
    The following identities are true:
    \begin{enumerate}
        \item $\Delta_\sigma \circ \Phi = (\Phi \otimes \Phi) \circ \Delta$,
        \item $\Delta \circ A_L = (A_L \otimes A_L) \circ \Delta$,
        \item $\Delta_\sigma \circ A = (A \otimes A) \circ \Delta_\sigma$.
    \end{enumerate}
\end{proposition}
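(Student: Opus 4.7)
The plan is to prove the three identities in order. Identity (1) compares the two natural coproducts relating $\EF_L$ and $\EF$ through $\Phi$; it reduces to a binomial count which I settle by direct inspection against the explicit formula for $\Delta_\sigma$. Identity (2) is the coalgebra-morphism property of $A_L$, which I extract from Lemma~\ref{lemma:LTC_split_sets} together with the observation that the coefficient $C(\pi \to \hat{\pi})$ is multiplicative across a splitting. Identity (3) will follow from (1) and (2) after one auxiliary remark about relabeling. The delicate point I anticipate is the symmetry count for (1) when a liana joins two connected components, which forces one to group liana-joined components into single combinatorial ``units'' before the binomial count; once this bookkeeping is in place, the remaining arguments are essentially formal.

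For identity (1), I fix $\pi \in \EF_L$ and write $\hat{\pi} = \Phi(\pi)$. Since a liana must remain inside a single factor of a concatenation, every splitting $\hat{\pi}_1 \cdot \hat{\pi}_2 = \hat{\pi}$ corresponds to a partition of the ``units'' of $\hat{\pi}$ (maximal subforests joined by lianas). If $\hat{\pi}$ contains $n_i$ copies of each unit type $\rho_i$, then $\sigma(\hat{\pi}) = \prod_i n_i!\,\sigma(\rho_i)^{n_i}$, and placing $m_i$ units of type $\rho_i$ into $\hat{\pi}_1$ lifts to $\prod_i \binom{n_i}{m_i}$ distinct labeled splittings of $\pi$. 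The elementary identity $\sigma(\hat{\pi})/(\sigma(\hat{\pi}_1)\sigma(\hat{\pi}_2)) = \prod_i \binom{n_i}{m_i}$ then matches the multiplicity in $(\Phi \otimes \Phi)(\Delta(\pi))$ against that prescribed by the explicit formula for $\Delta_\sigma(\hat{\pi})$ stated just above the proposition.

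For identity (2), I expand $\Delta(A_L(\pi))$ as a sum indexed by the set $S_1$ of Lemma~\ref{lemma:LTC_split_sets} and apply the bijection $S_1 \cong S_2$ to reindex it as a sum over splittings $\pi_1 \cdot \pi_2 = \pi$ followed by independent LTCs $\pi_i \to \hat{\pi}_i$. Every ELI and every IBP step in an LTC acts on vertices carrying labels from exactly one of $\pi_1$ or $\pi_2$, so the counts $|\pi \to \hat{\pi}|_{IBP_v}$ and $|\pi \to \hat{\pi}|_{IBP_\bullet}$ are additive under the bijection and
\[ C(\pi \to \hat{\pi}) \;=\; C(\pi_1 \to \hat{\pi}_1)\, C(\pi_2 \to \hat{\pi}_2). \]
The double sum then factors as $(A_L \otimes A_L)(\Delta(\pi))$, proving (2).

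For identity (3), I first observe that $\Phi \circ A_L = A \circ \Phi$: because $A$ is well-defined independently of the chosen labeling $\Psi$, any relabeling of a labeled exotic forest sends LTCs bijectively to LTCs with identical $C$-coefficients, so after $\Phi$ the output depends only on the underlying unlabeled forest. Chaining the identities already proved then gives
\begin{align*}
\Delta_\sigma \circ A
&= \Delta_\sigma \circ \Phi \circ A_L \circ \Psi \\
&= (\Phi \otimes \Phi) \circ \Delta \circ A_L \circ \Psi && \text{by (1)} \\
&= (\Phi \otimes \Phi) \circ (A_L \otimes A_L) \circ \Delta \circ \Psi && \text{by (2)} \\
&= \bigl((A \circ \Phi) \otimes (A \circ \Phi)\bigr) \circ \Delta \circ \Psi \\
&= (A \otimes A) \circ (\Phi \otimes \Phi) \circ \Delta \circ \Psi \\
&= (A \otimes A) \circ \Delta_\sigma \circ \Phi \circ \Psi && \text{by (1)} \\
&= (A \otimes A) \circ \Delta_\sigma,
\end{align*}
where the final step uses $\Phi \circ \Psi = \mathrm{id}$.
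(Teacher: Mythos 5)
Your proposal is correct and follows essentially the same route as the paper: the same three-part structure, the bijection of Lemma~\ref{lemma:LTC_split_sets} together with the multiplicativity of $C(\pi \to \hat{\pi})$ for identity (2), and the chaining of (1) and (2) via the labeling-independence of $A$ (your $\Phi \circ A_L = A \circ \Phi$ is the paper's insertion of the relabeling $\Psi \circ \Phi$) for identity (3). The only local difference is in identity (1), where you establish the multiplicity $\sigma(\hat{\pi})/(\sigma(\hat{\pi}_1)\sigma(\hat{\pi}_2))$ by a direct binomial count over liana-connected blocks, whereas the paper obtains the same count by invoking Proposition~\ref{prop:p_generalized}.
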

\begin{proof} We first prove identities $(1)$ and $(2)$ and use them to prove identity $(3)$.

    Step 1) Let us write the labeling of an exotic forests explicitly as $\alpha$ and take $(\pi, \alpha) \in EF_L$, then, we have
        \[ ((\Phi \otimes \Phi) \circ \Delta)(\pi, \alpha) = \sum_{\substack{\pi_1 \cdot \pi_2 = \pi \\ \alpha_1 \sqcup \alpha_2 = \alpha}} \pi_1 \otimes \pi_2 \]
        We note the splittings of $\alpha$ as $\alpha_1 \sqcup \alpha_2$ with $\alpha_i$ being the decoration of $\pi_i$ for $i = 1,2$. Every splitting of $\alpha$ corresponds to a decorated exotic forest $(\pi, \beta)$ with $\beta : V(\pi) \to \{1,2\}$ such that $\beta^{-1} (1) = \pi_1$ and $\beta^{-1}(2) = \pi_2$, therefore, 
        \[ |\{ (\alpha_1, \alpha_2) \; : \; \alpha_1 \sqcup \alpha_2 = \alpha \}| = |\{ \beta : V(\pi) \to \{1,2\} \; : \; \beta^{-1}(i) = \pi_i \}| = |B|. \]
        We see that $|B| = p(\pi, \beta, \emptyset)$ and, from Proposition \ref{prop:p_generalized}, we know that 
        \[p(\pi, \beta, \emptyset) = \frac{\sigma(\pi)}{\sigma(\pi, \beta)} = \frac{\sigma(\pi)}{\sigma(\pi_1)\sigma(\pi_2)}.\]
        This implies that, using the formula for $\Delta_\sigma$, we have
        \begin{align*}
            ((\Phi \otimes \Phi) \circ \Delta)(\pi, \alpha) &= \sigma(\pi) \sum_{\pi_1 \cdot \pi_2 = \pi} \frac{\pi_1}{\sigma(\pi_1)} \otimes \frac{\pi_2}{\sigma(\pi_2)} \\
            &= \sigma(\pi) \Delta_\sigma (\frac{\pi}{\sigma(\pi)}) = (\Delta_\sigma \circ \Phi) (\pi, \alpha).
        \end{align*}
        This proves identity $(1)$.

    Step 2) We follow the definition of $A_L$ and use the property $C(\pi \to \hat{\pi}) = C(\pi_1 \to \hat{\pi}_1) C(\pi_2 \to \hat{\pi}_2)$ where the LTC $\pi \to \hat{\pi}$ splits into the LTCs $\pi_1 \to \hat{\pi}_1$ and $\pi_2 \to \hat{\pi}_2$. We start by using the definitions of $A_L$ and $\Delta$ and obtain:
        \[ \Delta \circ A_L = \sum_{\substack{\pi \to \hat{\pi} \\ \hat{\pi}_1 \cdot \hat{\pi}_2 = \hat{\pi}}} C(\pi \to \hat{\pi}) (\hat{\pi}_1 \otimes \hat{\pi}_2). \]
        Then, we use Lemma \ref{lemma:LTC_split_sets} and group the terms to get
        \[ \Delta \circ A_L = \sum_{\pi_1 \cdot \pi_2 = \pi}  \Big( \sum_{\pi_1 \to \hat{\pi}_1} C(\pi_1 \to \hat{\pi}_1) \hat{\pi}_1 \Big) \otimes \Big( \sum_{\pi_2 \to \hat{\pi}_2} C(\pi_2 \to \hat{\pi}_2) \hat{\pi}_2 \Big). \]
        We use the definitions of $A_L$ and $\Delta$ to conclude the proof of identity $(2)$.

    Step 3) We use the definition of $A$ and identites $(1)$ and $(2)$ to show that
        \[ \Delta_\sigma \circ A = (\Phi \otimes \Phi) \circ (A_L \otimes A_L) \circ \Delta \circ \Psi. \]
        We note that the definition of $A$ accepts any injection $\Psi$, therefore, we can insert a relabeling $\Psi \circ \Phi$ to obtain
        \[ \Delta_\sigma \circ A = (\Phi \circ \Phi) \circ (A_L \otimes A_L) \circ (\Psi \otimes \Psi) \circ (\Phi \otimes \Phi) \circ \Delta \circ \Psi, \]
        which proves identity $(3)$ using the definition of $A$, identity $(1)$, and the property $\Phi \circ \Psi = \text{id}$.
\end{proof}

\begin{theorem}\label{thm:Coef_mult}
    Let us apply Algorithm 1 to an exotic S-series $ES(a)$ with $a$ being a character of $(\EF, \cdot)$, then, the map $\omega$ defined as (\ref{eq:omega}) is a character of $(\EF, \cdot)$, that is,
    \begin{equation}\label{eq:Coef_mult}
        \omega(\pi_1 \cdot \pi_2) = \omega(\pi_1) \omega(\pi_2), \quad \text{for } \pi_1, \pi_2 \in \EF.
    \end{equation}
\end{theorem}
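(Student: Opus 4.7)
The plan is to show that $\omega$ is a character by exploiting the coalgebra morphism property of $A$ established in Proposition \ref{prop:Alg_LD}(3) together with duality between the concatenation product on $\EF$ and $\Delta_\sigma$ with respect to $\langle \cdot,\cdot \rangle_\sigma$.

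The key observation is that $a$ being a character of $(\EF, \cdot)$ is equivalent to $\delta_\sigma(a)$ being group-like for $\Delta_\sigma$. Indeed, unpacking the formula for $\Delta_\sigma$, one computes
\begin{align*}
    \Delta_\sigma(\delta_\sigma(a))
    &= \sum_{\pi \in EF} \frac{a(\pi)}{\sigma(\pi)} \, \sigma(\pi) \sum_{\pi_1 \cdot \pi_2 = \pi} \frac{\pi_1}{\sigma(\pi_1)} \otimes \frac{\pi_2}{\sigma(\pi_2)} \\
    &= \sum_{\pi_1, \pi_2 \in EF} a(\pi_1 \cdot \pi_2) \frac{\pi_1}{\sigma(\pi_1)} \otimes \frac{\pi_2}{\sigma(\pi_2)},
\end{align*}
and the character hypothesis $a(\pi_1 \cdot \pi_2) = a(\pi_1) a(\pi_2)$ immediately yields $\Delta_\sigma(\delta_\sigma(a)) = \delta_\sigma(a) \otimes \delta_\sigma(a)$.

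Next, I would unfold $\omega(\pi_1 \cdot \pi_2)$ using its definition and the duality between $\cdot$ and $\Delta_\sigma$ induced by $\langle \cdot,\cdot \rangle_\sigma$:
\[
    \omega(\pi_1 \cdot \pi_2) = \langle (A \circ \delta_\sigma)(a), \pi_1 \cdot \pi_2 \rangle_\sigma = \langle \Delta_\sigma((A \circ \delta_\sigma)(a)), \pi_1 \otimes \pi_2 \rangle_\sigma.
\]
Applying Proposition \ref{prop:Alg_LD}(3) to commute $\Delta_\sigma$ past $A$, and then substituting the group-like identity for $\delta_\sigma(a)$, the right-hand side becomes
\[
    \langle (A \otimes A)(\delta_\sigma(a) \otimes \delta_\sigma(a)), \pi_1 \otimes \pi_2 \rangle_\sigma = \langle A(\delta_\sigma(a)), \pi_1 \rangle_\sigma \langle A(\delta_\sigma(a)), \pi_2 \rangle_\sigma = \omega(\pi_1)\omega(\pi_2).
\]

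The heavy lifting has already been done in Proposition \ref{prop:Alg_LD}(3), so no real obstacle remains. The only subtle point to double-check is the compatibility of the inner product $\langle \cdot,\cdot \rangle_\sigma$ with the dualization: one must verify that the formula for $\Delta_\sigma$ given in the text is indeed the adjoint of concatenation with respect to this renormalized pairing (which is exactly what the identity $\langle \pi_1 \cdot \pi_2, \pi \rangle_\sigma = \langle \pi_1 \otimes \pi_2, \Delta_\sigma(\pi)\rangle_\sigma = \sigma(\pi)$ in the text confirms), and that $A$ preserves the grading so that the infinite sums involved are well-defined on each graded piece.
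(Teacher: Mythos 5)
Your proposal is correct and follows essentially the same route as the paper's proof: unfold $\omega(\pi_1\cdot\pi_2)$ via the $\langle\cdot,\cdot\rangle_\sigma$-duality, commute $\Delta_\sigma$ past $A$ using Proposition \ref{prop:Alg_LD}(3), and conclude with the explicit formula for $\Delta_\sigma$ and the character property of $a$. The only difference is presentational: you make explicit that $a$ being a character means $\delta_\sigma(a)$ is group-like for $\Delta_\sigma$, a step the paper leaves implicit in the phrase ``we use the explicit formula for $\Delta_\sigma$'' (and you correctly invoke identity (3), where the paper's proof misquotes it as identity (2)).
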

\begin{proof}
    We use the definition of $\omega$ and $\Delta_\sigma$ to have
    \begin{align*}
        \omega(\pi_1 \cdot \pi_2) &= \langle (A \circ \delta_\sigma) (a), \pi_1 \cdot \pi_2 \rangle_\sigma = \langle (\Delta_\sigma \circ A \circ \delta_\sigma) (a), \pi_1 \otimes \pi_2 \rangle_\sigma \\
        \shortintertext{let us use identity $(2)$ from Proposition \ref{prop:Alg_LD}}
        &= \langle ((A \otimes A) \circ \Delta_\sigma \circ \delta_\sigma) (a), \pi_1 \otimes \pi_2 \rangle_\sigma \\
        \shortintertext{we use the explicit formula for $\Delta_\sigma$ to obtain}
        &= \langle (A \circ \delta_\sigma) (a), \pi_1 \rangle_\sigma \langle (A \circ \delta_\sigma) (a), \pi_2 \rangle_\sigma = \omega(\pi_1) \omega(\pi_2).
    \end{align*}
    and this finishes the proof.
\end{proof}

\section*{Acknowledgements}

The author would like to thank Dominique Manchon, Gilles Vilmart, and the anonymous reviewers for useful suggestions and comments. This work was partially supported by the Swiss National Science Foundation, projects No 200020\_214819, No. 200020\_184614, and No. 200020\_192129.

\newpage 

\appendix

\section{All grafted and exotic trees up to size 3}

\begin{table}[!htbp]
    \centering
    \begin{tabular}{||c | c | c | c | c || c | c | c | c | c ||} 
         \hline
         $|\tau|$ & $(\tau, \alpha_g)$ & $\sigma(\tau, \alpha_g)$ & $(\tau, \alpha_e)$ & $\sigma(\tau, \alpha_e)$ & $|\tau|$ & $(\tau, \alpha_g)$ & $\sigma(\tau, \alpha_g)$ & $(\tau, \alpha_e)$ & $\sigma(\tau, \alpha_e)$ \\[0.5ex] 
         \hline\hline
         $0.5$ & $\forest{x}$            & $1$  &                         &     & $3$  & $\forest{b[b[b]]}$      & $1$  & $\forest{b[b[b]]}$      & $1$ \\
         \cline{1-5}
         $1$   & $\forest{b}$            & $1$  & \forest{b}              & $1$ &      & $\forest{b[b,b]}$       & $2$  & $\forest{b[b,b]}$       & $2$ \\
         \cline{1-5}
         $1.5$ & $\forest{b[x]}$         & $1$  &                         &     &      & $\forest{b[b,x,x]}$     & $2$  & $\forest{b[b,1,1]}$     & $2$ \\
               & $\forest{(b),x}$        & $1$  &                         &     &      & $\forest{b[b[x],x]}$    & $1$  & $\forest{b[b[1],1]}$    & $1$ \\
         \cline{1-5}
         $2$   & $\forest{b[b]}$         & $1$  & $\forest{b[b]}$         & $1$ &      & $\forest{b[b[x,x]]}$    & $2$  & $\forest{b[b[1,1]]}$    & $2$ \\
               & $\forest{b[x,x]}$       & $2$  & $\forest{b[1,1]}$       & $2$ &      & $\forest{b[x,x,x,x]}$   & $24$ & $\forest{b[1,1,2,2]}$   & $8$ \\
               & $\forest{(b),b}$        & $1$  & $\forest{(b),b}$        & $1$ &      & $\forest{(b,b),b}$      & $2$  & $\forest{(b,b),b}$      & $2$ \\
               & $\forest{(b[x]),x}$     & $1$  & $\forest{(b[1]),1}$     & $1$ &      & $\forest{(b[b]),b}$     & $1$  & $\forest{(b[b]),b}$     & $1$ \\
         \cline{1-5}
         $2.5$   & $\forest{b[b,x]}$     & $1$  &                         &     &      & $\forest{(b),b[b]}$     & $1$  & $\forest{(b),b[b]}$     & $1$ \\
                 & $\forest{b[b[x]]}$    & $1$  &                         &     &      & $\forest{(b[x,x]),b}$   & $2$  & $\forest{(b[1,1]),b}$   & $2$ \\
                 & $\forest{b[x,x,x]}$   & $6$ &                          &     &      & $\forest{(b[x]),b[x]}$  & $1$  & $\forest{(b[1]),b[1]}$  & $1$ \\
                 & $\forest{(b[x]),b}$   & $1$  &                         &     &      & $\forest{(b),b[x,x]}$   & $2$  & $\forest{(b),b[1,1]}$   & $2$ \\
                 & $\forest{(b),b[x]}$   & $1$  &                         &     &      & $\forest{(b,b[x]),x}$   & $1$  & $\forest{(b,b[1]),1}$   & $1$ \\
                 & $\forest{(b[x,x]),x}$ & $2$  &                         &     &      & $\forest{(b[b,x]),x}$   & $1$  & $\forest{(b[b,1]),1}$   & $1$ \\
                 & $\forest{(b,b),x}$    & $2$  &                         &     &      & $\forest{(b[b[x]]),x}$  & $1$  & $\forest{(b[b[1]]),1}$  & $1$ \\
                 & $\forest{(b[b]),x}$   & $1$  &                         &     &      & $\forest{(b[x,x,x]),x}$ & $6$  & $\forest{(b[1,1,2]),2}$ & $2$ \\
         \hline
    \end{tabular}
    \caption{All grafted and exotic trees up to size $|\tau| = 3$ with their symmetries $\sigma$ as described in Section \ref{sec:tree_formalism}. }
    \label{table:all_grafted_exotic_trees}
\end{table}

\newpage
\section{Order conditions for the invariant measure up to order 3}
\begin{table}[!htbp]
{\renewcommand{\arraystretch}{2}
\begin{tabular}{ | C{1.5cm} |C{2cm}|C{11.5cm}| }
\hline
    & $ \pi $ & $ \omega(\pi) $ \\
\hline
\hline
    order 1 
    & $ \forest{b} $ & $ \sum_{i=1}^s b_i - 1 $ \\
\hline
\hline
    order 2
    & $ \forest{b[0,0]} $ & $ \sum b_i d_i^2 - \frac{1}{2} + \sum b_i - 2 \sum b_i d_i $ \\
\cline{2-3}
    & $ \forest{b[b]} $ & $ \sum b_i a_{ij} - \frac{1}{2} + \sum b_i - 2 \sum b_i d_i $ \\
\cline{2-3}
    $\ast$ & $ \forest{b,b} $ & $ \sum b_i b_j + 1 - 2 \sum b_i $ \\
\hline
\hline
    order 3 
    & $ \forest{b[b,b]} $ & $ - 4 \sum b_i d_i a_{ij} + \sum b_i + \sum b_i a_{ij} a_{ik} - 4 \sum b_i d_i - \frac{1}{3} + 4 \sum b_i d_i^2 + 2 \sum b_i a_{ij} $ \\
\cline{2-3}
    & $ \forest{b[b[b]]} $ & $ 2 \sum b_i d_i b_j + \frac{3}{2} \sum b_i - \sum b_i b_j - \sum b_i d_i b_j d_j + \sum b_i a_{ij} a_{jk} - 2 \sum b_i d_i - \frac{1}{2} + \sum b_i a_{ij} - 2 \sum b_i a_{ij} d_j $ \\
\cline{2-3}
    & $ \forest{b[0,0,b]} $ & $ - 2 \sum b_i d_i a_{ij} + \sum b_i + \sum b_i d_i^2 a_{ij} - 4 \sum b_i d_i - \frac{1}{3} + 5 \sum b_i d_i^2 - 2 \sum b_i d_i^3 + \sum b_i a_{ij} $ \\
\cline{2-3}
    & $ \forest{b[b[0],0]} $ & $ \sum b_i d_i b_j - \sum b_i d_i a_{ij} + \sum b_i - \frac{1}{2} \sum b_i b_j - \frac{1}{2} \sum b_i d_i b_j d_j - 2 \sum b_i d_i - \frac{1}{3} + \sum b_i d_i^2 + \sum b_i a_{ij} - \sum b_i a_{ij} d_j + \sum b_i d_i a_{ij} d_j $ \\
\cline{2-3}
    & $ \forest{b[b[0,0]]} $ & $ 2 \sum b_i d_i b_j + \frac{3}{2} \sum b_i - \sum b_i b_j - \sum b_i d_i b_j d_j - 2 \sum b_i d_i - \frac{1}{2} + \sum b_i a_{ij} - 2 \sum b_i a_{ij} d_j + \sum b_i a_{ij} d_j^2 $ \\
\cline{2-3}
    & $ \forest{b[0,0,1,1]} $ & $ \sum b_i - 4 \sum b_i d_i + 6 \sum b_i d_i^2 - \frac{1}{3} - 4 \sum b_i d_i^3 + \sum b_i d_i^4 $ \\
\cline{2-3}
    $*$ & $ \forest{b,b,b} $ & $ \sum b_i b_j b_k + 3 \sum b_i - 3 \sum b_i b_j - 1 $ \\
\cline{2-3}
    $*$ & $ \forest{b[b],b}  $ & $ \sum b_i a_{ij} b_j - 2 \sum b_i d_i b_j - \frac{3}{2} \sum b_i + \sum b_i b_j + 2 \sum b_i d_i - \sum b_i a_{ij} + \frac{1}{2} $ \\
\cline{2-3}
    $*$ & $ \forest{b[0,0],b} $ & $ \sum b_i d_i^2 b_j - 2 \sum b_i d_i b_j - \frac{3}{2} \sum b_i + \sum b_i b_j + 2 \sum b_i d_i - \sum b_i d_i^2 + \frac{1}{2} $ \\
\hline
\end{tabular}
    }
    \caption{Order conditions for stochastic Runge-Kutta methods of order $3$ generated by the Algorithm 1. The rows marked by $\ast$ correspond to the order conditions which are automatically satisfied using the Theorem \ref{thm:Coef_mult}.}
	\label{table:append_ord_cond}
\end{table}

\newpage

\bibliography{diss}

\end{document}